\newtheorem{thm}{Theorem}[section]
\newtheorem{cor}[thm]{Corollary}
\newtheorem{lem}[thm]{Lemma}
\newtheorem{prop}[thm]{Proposition}
\theoremstyle{definition}
\newtheorem{rem}[thm]{Remark}
\newtheorem{exam}[thm]{Example}
\numberwithin{equation}{section}
\newcommand{\RR}{\mathbb R}
\newcommand{\QQ}{\mathbb Q}
\newcommand{\ZZ}{\mathbb Z}
\newcommand{\CC}{\mathbb C}
\newcommand{\hra}{\hookrightarrow}
\newcommand{\ra}{\rightarrow}
\newcommand{\cA}{\mathcal{A}}
\newcommand{\cL}{\mathcal{L}}
\newcommand{\cH}{\mathcal{H}}
\newcommand{\cE}{\mathcal{E}}
\newcommand{\cG}{\mathcal{G}}
\newcommand{\cO}{\mathcal{O}}
\newcommand{\cF}{\mathcal{F}}
\DeclareMathOperator{\Hom}{{Hom}}
\DeclareMathOperator{\Pic}{Pic}
\DeclareMathOperator{\diag}{{diag}}
\DeclareMathOperator{\End}{{End}}
\DeclareMathOperator{\NS}{{NS}}
\begin{document}

\title{A criterion for an abelian variety to be non-simple}

\author{R. Auffarth, H. Lange, A. M. Rojas}

\address{R. Auffarth \\Departamento de Matem\'aticas, Facultad de
Ciencias, Universidad de Chile, Santiago\\Chile}
\email{rfauffar@uchile.cl}

\address{H. Lange\\Mathematisches Institut,
              Universit\"at Erlangen-N\"urnberg\\Germany}
\email{lange@mi.uni-erlangen.de}

\address{A. M. Rojas \\Departamento de Matem\'aticas, Facultad de
Ciencias, Universidad de Chile, Santiago\\Chile}
\email{anirojas@u.uchile.cl }


\thanks{Partially supported by Fondecyt Grants 3150171 and 1140507 and CONICYT PIA ACT1415.}%
\subjclass[2010]{14K02; 14K12; 32G20}%
\keywords{Abelian variety, numerical class, N\'eron-Severi group.}%

\maketitle

\begin{abstract}
We give a criterion in terms of period matrices for an arbitrary polarized abelian variety to be non-simple. Several examples are worked out.
\end{abstract}
\maketitle

\section{Introduction} 

Let $(A,\cL)$ be a complex abelian variety of dimension $g$ with polarization of type 
$D = \diag(d_1,\dots,d_g)$ defined by an ample line bundle $\cL$. So $A = V/\Lambda$   
where $V$ is a complex vector space of dimension $g$ and $\Lambda$ is a lattice of maximal 
rank in $\CC^g$   such that with respect to a basis of $V$ and a 
symplectic basis of $\Lambda$, $A$ is given by a period matrix $(D\; Z)$ with $Z$ in the Siegel 
upper half space of rank $g$. The aim of this paper to give a set of equations in the entries of the 
matrix $Z$ which characterize the fact that $(A,\cL)$ is non-simple. These equations are easy to 
work out for $g=2$ and can be given explicitly with the help of a computer program 
for $g = 3$.

To be more precise, the polarization induces a bijection 
$$
\varphi: \NS_{\QQ}(A) \ra 
\End_\QQ^s(A)
$$ 
of the rational N\'eron-Severi group 
$\NS_{\QQ}(A) := \NS(A) \otimes \QQ
= \left(\Pic(A)/\Pic^0(A)\right) \otimes \QQ$
with the $\QQ$-vector space $\End_{\QQ}^s(A) := \End^s(A) \otimes \QQ$ generated by the endomorphisms of 
$A$ which are symmetric with respect to the Rosati involution of $(A,\cL)$. Now an abelian 
subvariety $X$ of $A$ corresponds to a symmetric idempotent $\varepsilon_X \in \End^s_{\QQ}(A)$. So $\varphi^{-1}(\varepsilon_X)$ is an element of 
$\NS_{\QQ}(A)$. On the other hand, $\NS_{\QQ}(A)$ admits an intersection product 
which associates to $g$ elements $\alpha_1, \dots, \alpha_g \in \NS_{\QQ}(A)$ a rational number $(\alpha_1 \cdots \alpha_g)$. Theorem \ref{thm3.3} is a criterion for an element 
$\alpha \in \NS_{\QQ}(A)$ to be equal to $\varphi^{-1} \varepsilon_X$ for an abelian subvariety $X$ of $A$
in terms of the intersection numbers $(\alpha^r \cdot \cL^{g-r})$.

Introducing coordinates of $\Lambda$ as above and using the fact that 
$$
\NS_{\QQ}(A) = H^{1,1}(A)\cap H^2(A,\QQ)
$$ 
we translate the criterion into terms of
differential forms which finally gives the above mentioned equations in Theorem \ref{thm3.1} for the 
matrix $Z$. These have been outlined in \cite{robert} in the case of a principally polarized abelian 
variety. For our applications we need however the generalization  to an arbitrary polarized abelian 
variety as we will explain now.

Let $G$ be a finite group acting faithfully on the abelian variety $A$. Following \cite[Section 13.6]{bl}, this action induces a morphism $\rho$ from
the group algebra $\mathbb{Q}[G]$ to the rational endomorphism algebra $\text{End}(A)\otimes_{\mathbb{Z}}\mathbb{Q}$. Since $\mathbb{Q}[G]$ is
a semisimple algebra, it decomposes as a product of simple algebras $Q_1\times \dots \times Q_r$. Each $Q_i$ is generated by a central idempotent $e_i$, and
these are in correspondence with the rational irreducible representations of $G$. By defining $A_i=\rho(me_i),$ where $m$ is an integer such that $\rho(me_i)\in \text{End}(A)$, the so called isotypical decomposition of $A$ is obtained. It is an isogeny
$ A_1 \times \cdots \times A_r \ra A$ where the $A_i$ are abelian subvarieties of $A$  uniquely 
determined by the simple factors $Q_i$ of the rational group algebra $\QQ[G]$. 

In an analogous way the factors $A_i$ are decomposed further up to isogeny as $A_i \sim B_i^{n_i}$. This last decomposition for each isotypical factor comes from the decomposition of $Q_i$ as a product of minimal ideals. Therefore here $B_i$ is an abelian subvariety 
of $A_i$, not uniquely determined,  and $n_i = \frac{\deg \chi_i}{m_i}$, where $\chi_i$ is a
complex irreducible representation associated to the simple factor $Q_i$ and $m_i$ its Schur index
(see \cite[Section 13.6]{bl}). The decomposition
\begin{equation}
A \sim B_1^{n_1} \times \cdots \times B_r^{n_r}
\end{equation}
is called the {\it group algebra decomposition} of the $G$-abelian variety $A$. 
Our starting point was the question whether the abelian varieties $B_i$ are simple. 
Even if $A$ is principally polarized, the induced polarization on $B_i$ is in general not principal. So 
in order to discuss the simplicity of $B_i$ we need Theorem \ref{thm3.1} also in the  non-principally 
polarized case. We will outline several examples for this.

In Section 2 we recall and outline some more details about the relation between abelian subvarieties 
and symmetric idempotents of a polarized abelian variety. Section 3 contains the above criterion in 
terms of the N\'eron-Severi group and Section 4 its translation in terms of period matrices. Finally
Section 5 contains the examples.\\

\noindent\textit{Acknowledgements:} We would like to thank Pawe\l \hspace{0.04cm} Bor\'owka for pointing out a mistake in a previous version of Example \ref{Borowka}. 

\renewcommand{\nomname}{List of Symbols}

\nomenclature[01]{$A$}{complex abelian variety}
\nomenclature[02]{$\cL$}{ample line bundle on $A$}
\nomenclature[03]{$(d_1,\ldots,d_g)$}{type of the line bundle $\cL$}
\nomenclature[04]{$X$}{abelian subvariety of $A$}
\nomenclature[05]{$e_X$}{exponent of $X$}
\nomenclature[06]{$N_X$}{norm endomorphism of $X$}
\nomenclature[07]{$'$}{Rosati involution}
\nomenclature[08]{$\mbox{End}^s_\QQ(A)$}{$\QQ$-endomorphisms of $A$ fixed by $'$}
\nomenclature[09]{$\mbox{NS}_\QQ(A)$}{N\'eron-Severi group of $A$ tensored with $\QQ$}
\nomenclature[10]{$\delta_X$}{numerical class associated to $X$ in $\mbox{NS}_\QQ(A)$}
\nomenclature[11]{$\alpha_X$}{$\frac{1}{e_X}\delta_X$}
\nomenclature[12]{$\varepsilon_X$}{symmetric idempotent associated to $X$}
\nomenclature[13]{$(\alpha_1\cdots\alpha_g)$}{intersection number of numerical classes}
\nomenclature[14]{$V$}{tangent space of $A$ at 0}
\nomenclature[15]{$\Lambda$}{lattice of $A$ in $V$}
\nomenclature[16]{$Z$}{Riemann matrix of $A$}
\nomenclature[17]{$\sim$}{isogeny relation}

\printnomenclature[6cm]

\section{Abelian subvarieties and symmetric idempotents}

According to \cite[Section 5.3]{bl} there is a 1-1 correspondence between the set of abelian 
subvarieties of a polarized abelian variety and the set of symmetric idempotents of its endomorphism 
algebra. In the sequel we need however some more details of this relation which will be worked out in this section.

Let $(A,\mathcal{L})$ be a polarized abelian variety of dimension $g$ of type $(d_1,\ldots,d_g)$.
Here the $d_i$ are positive integers with $d_i|d_{i+1}$ for for all $i$. 
Moreover, if $\widehat{A}:=\mbox{Pic}^0(A)$ denotes the dual abelian variety and
$\phi_{\mathcal{L}}:A\to \widehat{A}$ is the morphism 
$a\mapsto t_a^*\mathcal{L}\otimes\mathcal{L}^{-1}$, then 
$$
\ker\phi_\mathcal{L}\simeq(\ZZ/d_1\ZZ\oplus\cdots\oplus\ZZ/d_g\ZZ)^2.
$$
Now let $X \subset A$ be an abelian subvariety of dimension $n$.  The restriction $\cL|_X$ 
defines a polarization on $X$ of some type $(e_1,\ldots,e_n)$. We will write $e_X$  instead of $e_n$ 
and call it the \emph{exponent} of $X$ (in $(A,\cL)$). We have the commutative diagram
\begin{equation}  \label{diag2.1}
\xymatrix
{
A \ar[r]^{\phi_{\cL}} & \widehat A \ar[d]^{\widehat \iota_X}\\
X\ar[r]^{\phi_{\cL|_X}} \ar[u]^{\iota_X}&\widehat{X}
}
\end{equation}
where $\iota_X$ is the natural inclusion. Since $\phi_{\mathcal{L}|_X}$ is an isogeny, also 
$$
\psi_X:= e_X \phi_{\cL|_X}^{-1}: \widehat{X}\ra X
$$ 
is an isogeny. The \emph{norm endomorphism} of $X$ (with respect to the polarization $\cL$) is 
defined by the composition
$$
N_X:=\iota_X\psi_X \widehat\iota_X\phi_\mathcal{L}\in\mbox{End}(A).
$$
Clearly the image of $N_X$ is $X$ itself.
Let 
$$
' : \End_{\QQ}(A) \to \End_{\QQ}(A), \qquad \alpha \mapsto \alpha' := \phi_{\cL}^{-1} \widehat \alpha 
\phi_{\cL} 
$$
 be the Rosati involution with respect to the polarization $\cL$ and let $\End_{\QQ}^s(A)$ be the subspace of elements of $\End_{\QQ}(A)$ fixed under it. Clearly $N_X \in \End_{\QQ}^s(A)$ and we have according to
 \cite[Prop. 5.2.1]{bl} that the map 
\begin{equation}  \label{eq2.1}
\varphi:\NS_{\QQ}(A)\to \End_{\QQ}^s(A),  \qquad  \alpha\mapsto\phi_\mathcal{L}^{-1}\phi_\alpha
\end{equation}
is an isomorphism of $\QQ$-vector spaces.
We define the {\it numerical class of $X$ in} $\NS_{\QQ}(A)$ by
$$
\delta_X:=\varphi^{-1}(N_X)\in \NS_{\QQ}(A) .
$$
Apart from $X$ it depends only on the polarization $\cL$ of $A$. Recall that  the symmetric idempotent associated to $X$ is defined as
$$
\varepsilon_X:=\frac{1}{e_X}N_X\in\mbox{End}_\QQ^s(A).
$$
Let $\alpha_X$ denote the class of $\NS_{\QQ}(A)$ corresponding to $\varepsilon_X$ via the 
isomorphism \eqref{eq2.1}, i.e.
$$
\alpha_X = \varphi^{-1}(\varepsilon_X).
$$

\begin{lem}   \label{l2.1}
Let $[N_X^*\mathcal{L}]$ denote the class of the line bundle $N_X^*{\cL}$ in $\NS(A)$. Then we have
$$
\delta_X=\frac{1}{e_X}[N_X^*\mathcal{L}] \qquad \mbox{and}  \qquad 
\alpha_X= \frac{1}{e_X^2} [N_X^*\cL] \in \NS_{\QQ}(A).
$$
\end{lem}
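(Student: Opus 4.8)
The plan is to prove the first identity by applying the isomorphism $\varphi$ of \eqref{eq2.1} to both sides and comparing the resulting symmetric endomorphisms. Since $\varphi$ is injective and $\varphi(\delta_X) = N_X$ by the very definition of $\delta_X$, it suffices to show that
$$
\varphi([N_X^*\cL]) = e_X\, N_X .
$$
First I would invoke the standard functoriality of the assignment $\alpha \mapsto \phi_\alpha$ under pullback: for an endomorphism $f$ of $A$ and a class $\beta \in \NS(A)$ one has $\phi_{f^*\beta} = \widehat{f}\,\phi_\beta\, f$. Applying this with $f = N_X$ and $\beta = [\cL]$ gives $\phi_{N_X^*\cL} = \widehat{N_X}\,\phi_{\cL}\, N_X$, and hence
$$
\varphi([N_X^*\cL]) = \phi_{\cL}^{-1}\widehat{N_X}\,\phi_{\cL}\, N_X = N_X'\, N_X,
$$
where $N_X'$ is the image of $N_X$ under the Rosati involution. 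Because $N_X \in \End_{\QQ}^s(A)$ we have $N_X' = N_X$, so that $\varphi([N_X^*\cL]) = N_X^2$.

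It then remains to establish the idempotency relation $N_X^2 = e_X N_X$, which I would derive directly from the definition $N_X = \iota_X\psi_X\widehat{\iota_X}\phi_{\cL}$ together with the commutative diagram \eqref{diag2.1}. That diagram reads $\widehat{\iota_X}\,\phi_{\cL}\,\iota_X = \phi_{\cL|_X}$; substituting it into the middle of $N_X^2 = \iota_X\psi_X(\widehat{\iota_X}\phi_{\cL}\iota_X)\psi_X\widehat{\iota_X}\phi_{\cL}$ collapses the inner composition, and using $\psi_X \phi_{\cL|_X} = e_X\,\mathrm{id}_X$ (immediate from $\psi_X = e_X\phi_{\cL|_X}^{-1}$) yields $N_X^2 = e_X N_X$. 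Combining the displays gives $\varphi([N_X^*\cL]) = N_X^2 = e_X N_X = \varphi(e_X\delta_X)$, and injectivity of $\varphi$ forces $[N_X^*\cL] = e_X\delta_X$, i.e. $\delta_X = \frac{1}{e_X}[N_X^*\cL]$.

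The second identity is then immediate: since $\varepsilon_X = \frac{1}{e_X}N_X$ and $\varphi$ is $\QQ$-linear, $\alpha_X = \varphi^{-1}(\varepsilon_X) = \frac{1}{e_X}\varphi^{-1}(N_X) = \frac{1}{e_X}\delta_X = \frac{1}{e_X^2}[N_X^*\cL]$. The only genuinely delicate point is the bookkeeping in the chain $\varphi([N_X^*\cL]) = N_X'N_X = N_X^2 = e_X N_X$: one must match the pullback formula against the definition of the Rosati involution correctly, and note that it is precisely the symmetry $N_X' = N_X$ that turns $N_X'N_X$ into the square $N_X^2$ to which the idempotency relation applies. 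Everything else is a direct diagram chase with the definitions.
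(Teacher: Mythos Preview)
Your proof is correct and follows essentially the same route as the paper's: both compute $\varphi([N_X^*\cL]) = \phi_{\cL}^{-1}\widehat{N_X}\phi_{\cL}N_X = N_X'N_X = N_X^2 = e_X N_X$ and then invoke injectivity of $\varphi$, with the second identity following from $\alpha_X = \frac{1}{e_X}\delta_X$. The only difference is that you spell out the derivation of $N_X^2 = e_X N_X$ from diagram \eqref{diag2.1}, whereas the paper simply asserts it.
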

\begin{proof}
Using the symmetry of $N_X$ with respect to Rosati and the fact that $N_X^2 = e_X N_X$ we have
$$
\varphi\left(
\frac{1}{e_X} [N_X^*\cL] \right) = 
\phi_{\cL}^{-1}\phi_{\frac{1}{e_X}N_X^*\mathcal{L}}=
\frac{1}{e_X}\phi_{\cL}^{-1}\widehat{N}_X\phi_{\mathcal{L}}N_X=
\frac{1}{e_X} N_X' N_X=
\frac{1}{e_X}N_X^2=N_X.
$$
This gives the first equation, since $\varphi$ is an isomorphism. The second equation follows from this 
and $\alpha_X=\frac{1}{e_X}\delta_X$.
\end{proof}

\begin{lem} \label{l2.2}
The map
$$
\mu_X: \End_\QQ(X) \ra \End_\QQ(A), \qquad \alpha \mapsto 
\frac{1}{e_X}\iota_X \alpha \psi_X \widehat \iota_X \phi_\cL
$$
is an injective homomorphism of $\QQ$-algebras with
$$
\mu_X(1_X) = \varepsilon_X.
$$
\end{lem}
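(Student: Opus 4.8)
The plan is to first isolate the single algebraic identity that makes everything work, and then derive multiplicativity, the normalization, and injectivity from it. The key identity is
\begin{equation} \label{eq:keymu}
\psi_X \widehat\iota_X \phi_\cL \iota_X = e_X \cdot 1_X.
\end{equation}
To see this I would read off from the commutative diagram \eqref{diag2.1} that $\widehat\iota_X \phi_\cL \iota_X = \phi_{\cL|_X}$, and then substitute the definition $\psi_X = e_X \phi_{\cL|_X}^{-1}$, so that $\psi_X \widehat\iota_X \phi_\cL \iota_X = e_X \phi_{\cL|_X}^{-1} \phi_{\cL|_X} = e_X \cdot 1_X$. This is really the only geometric input; once it is in hand the rest is formal.

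For multiplicativity, I would compute directly, for $\alpha, \beta \in \End_\QQ(X)$,
$$
\mu_X(\alpha)\mu_X(\beta) = \frac{1}{e_X^2}\, \iota_X \alpha \bigl(\psi_X \widehat\iota_X \phi_\cL \iota_X\bigr) \beta \psi_X \widehat\iota_X \phi_\cL,
$$
and insert \eqref{eq:keymu} into the bracketed factor to replace it by $e_X \cdot 1_X$. One factor of $e_X$ then cancels, leaving $\frac{1}{e_X}\iota_X(\alpha\beta)\psi_X\widehat\iota_X\phi_\cL = \mu_X(\alpha\beta)$. The $\QQ$-linearity of $\mu_X$ is immediate since composition of homomorphisms is $\QQ$-bilinear and all the outer maps are fixed. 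The normalization $\mu_X(1_X) = \varepsilon_X$ is simply the definition: $\mu_X(1_X) = \frac{1}{e_X}\iota_X \psi_X \widehat\iota_X \phi_\cL = \frac{1}{e_X}N_X = \varepsilon_X$.

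For injectivity, the cleanest route is to record the relation $\mu_X(\alpha)\iota_X = \iota_X \alpha$, which again follows from \eqref{eq:keymu}: indeed $\mu_X(\alpha)\iota_X = \frac{1}{e_X}\iota_X\alpha(\psi_X\widehat\iota_X\phi_\cL\iota_X) = \frac{1}{e_X}\iota_X\alpha(e_X 1_X) = \iota_X\alpha$. If $\mu_X(\alpha) = 0$ then $\iota_X\alpha = 0$; since $\iota_X$ is the inclusion of an abelian subvariety it is injective as a homomorphism, so $\iota_X\alpha = 0$ forces $\alpha = 0$ in $\End_\QQ(X)$. I do not anticipate a genuine obstacle here; the only point requiring care is keeping the order of composition straight and correctly matching the source and target of each map (for instance that $\psi_X\widehat\iota_X\phi_\cL$ lands in $X$ so that $\alpha$ may be applied), which is exactly what \eqref{eq:keymu} certifies.
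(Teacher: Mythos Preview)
Your proof is correct and follows exactly the approach of the paper: the paper's entire proof reads ``The proof of the multiplicativity uses the fact that $\psi_X \widehat\iota_X \phi_\cL \iota_X = e_X 1_X$. The other assertions are obvious.'' You have simply (and correctly) spelled out the details the paper leaves implicit, including a clean injectivity argument via $\mu_X(\alpha)\iota_X = \iota_X\alpha$.
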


\begin{proof}
The proof of the multiplicativity uses the fact that 
$\psi_X \widehat \iota_X \phi_\cL \iota_X =e_X 1_X$. The other assertions are obvious.
\end{proof}

Now consider a pair of complementary abelian subvarieties   
$\iota_{Y_i}^X: Y_i \hra X,\; i = 1,2,$ of the polarized abelian variety $(X,\cL|_X)$, i.e. 
with respect to the polarization $\cL|_X$.
Let $e_{Y_i}$ be the exponent of the polarization $(\cL|_X)|_{Y_i} = \cL|_{Y_i}$ and 
$\psi_{Y_i} = e_{Y_i} \phi_{\cL|_{Y_i}}^{-1}$. Let $N_X$ and $N_{Y_i}$ denote the norm 
endomorphisms of $X$ and $Y_i$ in $\End(A)$ with respect to the polarization $\cL$ and 
$\varepsilon_X$ and $\varepsilon_{Y_i}$ the corresponding symmetric idempotents of 
$\End_\QQ(A)$. 

\begin{lem} \label{l2.3}
With these notations we have
\begin{enumerate}
\item[(a)]  $e_X e_{Y_2} N_{Y_1} + e_X e_{Y_1} N_{Y_2} = e_{Y_1} e_{Y_2} N_X$,
\item[(b)] $\varepsilon_{Y_1} + \varepsilon_{Y_2} = \varepsilon_X$.
\end{enumerate}
\end{lem}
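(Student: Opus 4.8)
The plan is to reduce both statements to a single fact about complementary subvarieties inside $(X,\cL|_X)$ and then transport it to $\End_\QQ(A)$ via the homomorphism $\mu_X$ of Lemma \ref{l2.2}. I write $\iota_{Y_i}^X\colon Y_i\hra X$ for the inclusions into $X$ and $\iota_{Y_i}=\iota_X\iota_{Y_i}^X\colon Y_i\hra A$ for the inclusions into $A$. Inside $\End_\QQ(X)$ the subvarieties $Y_1,Y_2$ carry their own norm endomorphisms $N_{Y_i}^X=\iota_{Y_i}^X\psi_{Y_i}\widehat{\iota_{Y_i}^X}\phi_{\cL|_X}$ and symmetric idempotents $\varepsilon_{Y_i}^X=\frac{1}{e_{Y_i}}N_{Y_i}^X$ relative to $\cL|_X$. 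Note that the exponents $e_{Y_i}$ and the isogenies $\psi_{Y_i}$ are unambiguous because $(\cL|_X)|_{Y_i}=\cL|_{Y_i}$, so these are exactly the data that enter the norm endomorphisms $N_{Y_i}$ of $Y_i$ in $\End(A)$.

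The key step will be the identity
$$
\mu_X(\varepsilon_{Y_i}^X)=\varepsilon_{Y_i}\qquad(i=1,2),
$$
which I would verify by unwinding the definitions. Substituting $\varepsilon_{Y_i}^X$ into $\mu_X$ gives
$$
\mu_X(\varepsilon_{Y_i}^X)=\frac{1}{e_Xe_{Y_i}}\,\iota_X\iota_{Y_i}^X\,\psi_{Y_i}\,\widehat{\iota_{Y_i}^X}\,\phi_{\cL|_X}\,\psi_X\,\widehat{\iota_X}\,\phi_\cL.
$$
Here I would use $\phi_{\cL|_X}\psi_X=e_X\,1_{\widehat X}$, which is immediate from $\psi_X=e_X\phi_{\cL|_X}^{-1}$, to cancel the middle factors, together with the functoriality identities $\iota_X\iota_{Y_i}^X=\iota_{Y_i}$ and $\widehat{\iota_{Y_i}^X}\,\widehat{\iota_X}=\widehat{\iota_{Y_i}}$. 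The right-hand side then collapses to $\frac{1}{e_{Y_i}}\iota_{Y_i}\psi_{Y_i}\widehat{\iota_{Y_i}}\phi_\cL=\frac{1}{e_{Y_i}}N_{Y_i}=\varepsilon_{Y_i}$, as claimed. This is the step where the two polarization structures $\cL|_X$ and $\cL$ are reconciled, and hence the main technical point; everything else is formal.

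Granting this identity, part (b) is immediate. Since $Y_1$ and $Y_2$ are complementary in $(X,\cL|_X)$, the theory of complementary abelian subvarieties in \cite[Section 5.3]{bl} gives $\varepsilon_{Y_1}^X+\varepsilon_{Y_2}^X=1_X$ in $\End_\QQ(X)$. Applying the $\QQ$-linear map $\mu_X$ and using $\mu_X(1_X)=\varepsilon_X$ from Lemma \ref{l2.2} yields
$$
\varepsilon_{Y_1}+\varepsilon_{Y_2}=\mu_X(\varepsilon_{Y_1}^X+\varepsilon_{Y_2}^X)=\mu_X(1_X)=\varepsilon_X.
$$
Finally, part (a) follows formally from (b): substituting $N_{Y_i}=e_{Y_i}\varepsilon_{Y_i}$ and $N_X=e_X\varepsilon_X$ and multiplying (b) through by $e_Xe_{Y_1}e_{Y_2}$ gives exactly $e_Xe_{Y_2}N_{Y_1}+e_Xe_{Y_1}N_{Y_2}=e_{Y_1}e_{Y_2}N_X$. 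I expect no obstacle here beyond the bookkeeping already carried out for the key identity.
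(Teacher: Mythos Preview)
Your proof is correct and follows essentially the same approach as the paper: both transport the complementarity relation from $\End_\QQ(X)$ to $\End_\QQ(A)$ via $\mu_X$, after verifying that $\mu_X$ carries the norm data of $Y_i$ in $X$ to that in $A$ (your cancellation $\phi_{\cL|_X}\psi_X=e_X\,1_{\widehat X}$ is exactly the simplification the paper uses implicitly). The only cosmetic difference is the order: the paper applies $\mu_X$ to the norm identity $e_{Y_2}N_{Y_1}^X+e_{Y_1}N_{Y_2}^X=e_{Y_1}e_{Y_2}1_X$ to obtain (a) and then divides to get (b), whereas you work with the equivalent idempotent identity to get (b) first and then multiply up to (a).
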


\begin{proof}
Let $N_{Y_i}^X \in \End(X)$ denote the norm endomorphism of $Y_i$ with respect to the 
polarization $\cL|_X$ for $i = 1,2$. 
We have for $i=1,2$,
$$
\mu_X(N_{Y_i}^X) =  \frac{1}{e_X} \iota_X N_{Y_i}^X \psi_X \widehat \iota_X \phi_{\cL} 
=   \frac{1}{e_X} \iota_X \iota_{Y_i}^X \psi_{Y_i} \widehat \iota_{Y_i}^X \phi_{\cL|_X} \psi_X \widehat \iota_X \phi_\cL
=  \iota_{Y_i} \psi_{Y_i} \widehat \iota_{Y_i} \phi_{\cL} =  N_{Y_i}
$$
Now \cite[page 125, equation (4)]{bl} says 
$$
e_{Y_2}N_{Y_1}^X + e_{Y_1} N_{Y_2}^X = e_{Y_1} e_{Y_2} 1_X.
$$
Applying the map $\mu_X$ we get
$$
e_{Y_1}e_{Y_2}\frac{1}{e_X}N_X = e_{Y_1}e_{Y_2}\mu_X(1_X) = 
e_{Y_2}\mu_X(N_{Y_1}^X) + e_{Y_1}\mu_X( N_{Y_2}^X) =
e_{Y_2}N_{Y_1}+ e_{Y_1} N_{Y_2}.
$$
This gives (a). Dividing (a) by $e_{Y_1}e_{Y_2}e_X$ gives (b).
\end{proof}

Let $f:A \ra B$ be an isogeny of (unpolarized) abelian varieties. So $f^{-1}$ exists in
$\Hom_\QQ(B,A)$ and the abelian subvarieties of $A$ correspond bijectively to the abelian 
subvarieties of $B$ under the map $X \mapsto f(X)$. Moreover the map
$$
\widetilde f: \End_\QQ(B) \ra \End_\QQ(A), \qquad  b \mapsto f^{-1} b f
$$
is an isomorphism of $\QQ$-algebras.
Now let 
$$
f: (A,\cL_A) \ra (B,\cL_B)
$$ 
be an isogeny of polarized abelian varieties, i.e. 
$f^*\cL_B \equiv \cL_A$. Let $\varphi_A: \NS_\QQ(A) \ra \End_\QQ^s(A)$, respectively 
$\varphi_B: \NS_\QQ(B) \ra \End_\QQ^s(B)$ be the isomorphisms associated to the polarization 
$\cL_A$, respectively $\cL_B$. Then we have,

\begin{lem} \label{l4.4}
$\widetilde f$ restricts to an isomorphism of $\QQ$-vector spaces 
$\widetilde f: \End_\QQ^s(B) \ra \End_\QQ^s(A)$ and the following diagram commutes
$$
\xymatrix
{
\NS_\QQ(B)  \ar[r]^{\varphi_B} \ar[d]_{f^*}& \End_\QQ^s(B) \ar[d]^{\widetilde f}\\
\NS_\QQ(A) \ar[r]^{\varphi_A} & \End_\QQ^s(A)
}
$$
\end{lem}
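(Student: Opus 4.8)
The plan is to reduce everything to the single standard pullback formula for polarization homomorphisms, namely $\phi_{f^*\cM}=\widehat f\,\phi_\cM\,f$ for any line bundle $\cM$ on $B$. Applying this to $\cM=\cL_B$ and using the hypothesis $f^*\cL_B\equiv\cL_A$ yields the basic identity
$$
\phi_{\cL_A}=\widehat f\,\phi_{\cL_B}\,f,\qquad\text{equivalently}\qquad \phi_{\cL_A}^{-1}=f^{-1}\phi_{\cL_B}^{-1}\widehat f^{-1},
$$
where all inverses are taken in the isogeny category (so that $\widehat{f^{-1}}=\widehat f^{-1}$, and dualization is contravariant, $\widehat{gh}=\widehat h\,\widehat g$). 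Both parts of the lemma will then follow by substituting this identity into the relevant definitions.

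For the first assertion I would prove the slightly stronger statement that $\widetilde f$ intertwines the two Rosati involutions, that is $\widetilde f(\beta')=\bigl(\widetilde f(\beta)\bigr)'$ for every $\beta\in\End_\QQ(B)$, where on the left $'$ denotes the Rosati involution of $(B,\cL_B)$ and on the right that of $(A,\cL_A)$. Unwinding the right-hand side gives $\bigl(f^{-1}\beta f\bigr)'=\phi_{\cL_A}^{-1}\,\widehat f\,\widehat\beta\,\widehat f^{-1}\,\phi_{\cL_A}$, using $\widehat{f^{-1}\beta f}=\widehat f\,\widehat\beta\,\widehat f^{-1}$; substituting the basic identity for $\phi_{\cL_A}^{\pm 1}$ collapses this to $f^{-1}\phi_{\cL_B}^{-1}\widehat\beta\,\phi_{\cL_B}\,f$, which is exactly $\widetilde f(\beta')$. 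Since $\widetilde f$ is already an isomorphism of $\QQ$-algebras, intertwining the involutions forces it to carry the fixed subspace $\End_\QQ^s(B)$ isomorphically onto $\End_\QQ^s(A)$: if $\beta'=\beta$ then $\widetilde f(\beta)'=\widetilde f(\beta)$, and conversely any symmetric $\gamma\in\End_\QQ^s(A)$ has a symmetric preimage $\widetilde f^{-1}(\gamma)$ by injectivity.

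For the commutativity of the square I would run the same substitution with $\varphi$ in place of the Rosati involution. Fix $\alpha\in\NS_\QQ(B)$ and apply the pullback formula to the class $\alpha$, so that $\phi_{f^*\alpha}=\widehat f\,\phi_\alpha\,f$. Then
$$
\varphi_A(f^*\alpha)=\phi_{\cL_A}^{-1}\phi_{f^*\alpha}=f^{-1}\phi_{\cL_B}^{-1}\widehat f^{-1}\,\widehat f\,\phi_\alpha\,f=f^{-1}\bigl(\phi_{\cL_B}^{-1}\phi_\alpha\bigr)f=\widetilde f\bigl(\varphi_B(\alpha)\bigr),
$$
which is precisely the relation $\varphi_A\circ f^{*}=\widetilde f\circ\varphi_B$ asserted by the diagram.

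The computations are short, so I do not anticipate a genuine obstacle; the only points requiring care are purely formal. First, one must keep track of the contravariance of dualization so that $f$ and $\widehat f$ land on the correct sides, and remember that we work in $\Hom_\QQ$, where $f^{-1}$, $\widehat f^{-1}$ and the identity $\widehat{f^{-1}}=\widehat f^{-1}$ all make sense. Second, the map $f^*$ in the diagram is the $\QQ$-linear extension of the pullback of line-bundle classes, and I would note explicitly that it is well defined as a map $\NS_\QQ(B)\to\NS_\QQ(A)$; the commutativity just proved then shows \emph{a posteriori} that $f^*$ is itself an isomorphism, being sandwiched between the isomorphisms $\varphi_B$, $\widetilde f$ and $\varphi_A^{-1}$.
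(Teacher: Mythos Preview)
Your proof is correct and follows essentially the same route as the paper: both rely on the pullback identity $\phi_{f^*\cM}=\widehat f\,\phi_\cM\,f$ and perform the same chain of substitutions to obtain $\varphi_A\circ f^*=\widetilde f\circ\varphi_B$. The only minor difference is in the first assertion: the paper checks that symmetric elements map to symmetric elements and then invokes a dimension count to get surjectivity, whereas you prove the cleaner statement that $\widetilde f$ intertwines the two Rosati involutions, which yields the isomorphism of fixed subspaces directly without appealing to dimensions.
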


\begin{proof}
For the first assertion it suffices to show that if $b \in \End_\QQ(B)$ is symmetric with respect to 
$\cL_B$, then $\widetilde f(b)$ is symmetric with respect to $\cL_A$, since both $\QQ$-vector
subspaces are of the same dimension. This is a straightforward computation. For the commutativity of the diagram let $\beta \in \NS_\QQ(B)$. Then
$
\widetilde f \varphi_B(\beta) = f^{-1} \phi_{\cL_B}^{-1} \phi_{\beta} f =  
(\widehat f \phi_{\cL_B} f)^{-1} (\widehat f \phi_{\beta} f) = \phi_{f^*\cL_B}^{-1} \phi_{f^*(\beta)} = \varphi_{A} f^*(\beta).
$
\end{proof}

Consider the direct image map $f_*: \NS_\QQ(A) \ra \NS_\QQ(B)$ defined by the norm map 
of $f$. 

\begin{cor}
Let $X$ be an abelian subvariety of $A$ and $Y=f(X)$ with symmetric idempotents $\varepsilon_X$
and $\varepsilon_Y$. Then we have for $\alpha_X= \varphi_A^{-1}(\varepsilon_X)$ and 
$\alpha_Y=\varphi_B^{-1}(\varepsilon_Y)$
$$\alpha_X=f^*\alpha_Y$$
$$
f_*(\alpha_X) = \deg (f) \alpha_Y.
$$
\end{cor}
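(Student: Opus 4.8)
The plan is to deduce both identities from Lemma \ref{l4.4} together with the characterization of symmetric idempotents by their images. I would treat the two equations in order, since the second follows formally from the first.

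First I would establish $\alpha_X = f^*\alpha_Y$. Applying the isomorphism $\varphi_A$ to both sides, this is equivalent to $\varepsilon_X = \varphi_A(f^*\alpha_Y)$, and by the commutativity of the diagram in Lemma \ref{l4.4}, which reads $\varphi_A \circ f^* = \widetilde f \circ \varphi_B$, the right-hand side equals $\widetilde f(\varphi_B(\alpha_Y)) = \widetilde f(\varepsilon_Y)$. Hence it suffices to prove the identity $\varepsilon_X = \widetilde f(\varepsilon_Y)$ in $\End_\QQ^s(A)$. To do this I would use the fact recalled just before Lemma \ref{l4.4}, namely that $\widetilde f$ is a $\QQ$-algebra isomorphism under which the abelian subvarieties of $B$ correspond bijectively to those of $A$ via $Y \mapsto f^{-1}(Y)$. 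Since $\widetilde f(\varepsilon_Y) = f^{-1}\varepsilon_Y f$ is again an idempotent, and it lies in $\End_\QQ^s(A)$ by the first assertion of Lemma \ref{l4.4}, it is the symmetric idempotent associated to some abelian subvariety of $A$. Computing its image gives $\widetilde f(\varepsilon_Y)(A) = f^{-1}(\varepsilon_Y(B)) = f^{-1}(Y) = X$, where I use that the image of $\varepsilon_Y$ is $Y$ and that $Y = f(X)$ with $f$ an isogeny. By the 1-1 correspondence between abelian subvarieties and symmetric idempotents of \cite[Section 5.3]{bl}, a symmetric idempotent is determined by its image, so $\widetilde f(\varepsilon_Y) = \varepsilon_X$, which proves the first equation.

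For the second equation I would invoke the standard property of the norm map of an isogeny, namely $f_* f^* = \deg(f)$ as an endomorphism of $\NS_\QQ(B)$, equivalently $\Nm_f(f^*\beta) = \deg(f)\,\beta$. Combining this with the first equation yields
$$
f_*(\alpha_X) = f_*\bigl(f^*\alpha_Y\bigr) = \deg(f)\,\alpha_Y,
$$
as desired.

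The main obstacle I anticipate is the identification $\widetilde f(\varepsilon_Y) = \varepsilon_X$; everything else is formal. The delicate point is to argue cleanly that the rational idempotent $f^{-1}\varepsilon_Y f$ is still a \emph{symmetric} idempotent and that its image is exactly $X$, so that the uniqueness in the correspondence of \cite[Section 5.3]{bl} can be applied. Symmetry is provided directly by Lemma \ref{l4.4}, so the only step requiring genuine care is the image computation, which in turn rests on the bijection $Y \mapsto f^{-1}(Y)$ between abelian subvarieties induced by the isogeny.
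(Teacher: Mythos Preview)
Your proposal is correct and follows the same route as the paper: the paper also deduces $\alpha_X = f^*\alpha_Y$ from Lemma~\ref{l4.4} and then obtains the second identity from $f_*f^* = \deg(f)$ (citing \cite[Proposition 21.10.18]{g}). In fact you supply more detail than the paper does, since the paper simply asserts that Lemma~\ref{l4.4} yields $\alpha_X = f^*\alpha_Y$, whereas you spell out the intermediate identification $\widetilde f(\varepsilon_Y) = \varepsilon_X$ via the image computation and the uniqueness in the correspondence of \cite[Section 5.3]{bl}.
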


\begin{proof}
According to Lemma \ref{l4.4} we have $\alpha_X = f^*\alpha_Y$. This implies 
$$
f_*\alpha_X = f_*f^*\alpha_Y = \deg (f) \alpha_Y
$$
where the last equation can be seen, for example, in \cite[Proposition 21.10.18]{g}.
\end{proof}

\begin{rem}
The corollary can also be expressed in terms of the polarizations, since $deg(f) = 
\frac{\chi(\cL_A)}{\chi(\cL_B)}$.
\end{rem}

\section{The criterion in terms of the N\'eron-Severi group}

Recall that for any $g$-tuple of line bundles $\cL_i$ on a $g$-dimensional abelian variety $A$ an intersection number 
$(\cL_1 \cdots \cL_g)$ is defined. It extends in the usual way to give a rational intersection number 
$$
(\alpha_1 \cdots \alpha_g) \in \QQ
$$ 
for any elements $\alpha_i$ in $\NS_{\QQ}(A)$.
Let $\chi(\mathcal{L})$ be the Euler characteristic of $\mathcal{L}$; by Riemann-Roch this is just $d_1\cdots d_n$ if $\cL$ is of type $(d_1, \dots, d_g)$. Now let $X$ be an abelian subvariety 
of dimension $n$ of the polarized abelian variety $(A,\cL)$ of dimension $g$. Let $\delta_X$ denote 
the numerical class of $X$ in $\NS_\QQ(A)$ and $\alpha_X = \frac{1}{e_X} \delta_X$ as in Section 2. 

\begin{prop}\label{intersection} 
We have the following intersection numbers: 
$$
(\delta_X^r\cdot\mathcal{L}^{g-r})=\left\{\begin{array}{lcl} 
                                             e_X^r\chi(\cL) n! \prod_{i=n+1}^{g} (i-r)&for& 1\leq r\leq n,\\
                                             0&for& r \geq n+1.
                                             \end{array}\right.
$$
and
$$
(\alpha_X^r\cdot\mathcal{L}^{g-r})=\left\{\begin{array}{lcl} 
                                             \chi(\cL) n! \prod_{i=n+1}^{g} (i-r)&for& 1\leq r\leq n,\\
                                             0&for& r \geq n+1.
                                             \end{array}\right.
$$
\end{prop}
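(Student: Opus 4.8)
The plan is to reduce everything to the single generating identity
$$\left((\cL+t\alpha_X)^g\right)=g!\,\chi(\cL)\,(1+t)^n$$
and then to extract the intersection numbers by comparing coefficients of $t^r$. Since $\delta_X=e_X\alpha_X$ gives $(\delta_X^r\cdot\cL^{g-r})=e_X^r(\alpha_X^r\cdot\cL^{g-r})$, it suffices to prove the formula for $\alpha_X$.

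First I would pass to the analytic side. Writing $A=V/\Lambda$ and letting $H$ be the positive definite Hermitian form on $V$ representing $\cL$, recall that under $\varphi$ a class $\beta\in\NS_\QQ(A)$ corresponds to the analytic representation $f_\beta$ of $\varphi(\beta)$, which is the unique $H$-self-adjoint endomorphism of $V$ characterized by $H_\beta(v,w)=H(f_\beta v,w)$, where $H_\beta$ is the Hermitian form of $\beta$. I apply this to $\alpha_X=\varphi^{-1}(\varepsilon_X)$: since $\varepsilon_X$ is a symmetric idempotent with image $X$, its analytic representation is an $H$-self-adjoint idempotent of $V$ whose image is the tangent space $V_X$ of $X$; hence it is the $H$-orthogonal projection $\pi_X$ onto the $n$-dimensional subspace $V_X$, with eigenvalues $1$ (with multiplicity $n$) and $0$ (with multiplicity $g-n$).

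Next I would compute the self-intersection of the twisted class. The class $\cL+t\alpha_X$ has Hermitian form $H+tH_{\alpha_X}=H\big((1+t\pi_X)\,\cdot\,,\cdot\big)$, and by Riemann--Roch $\big((\cL+t\alpha_X)^g\big)=g!\,\chi(\cL+t\alpha_X)=g!\,\mathrm{Pf}\big(\mathrm{Im}(H+tH_{\alpha_X})\big)$. The key step is the identity $\mathrm{Pf}\big(\mathrm{Im}\,H(h\,\cdot\,,\cdot)\big)=\det_\CC(h)\,\mathrm{Pf}(\mathrm{Im}\,H)$ for every $H$-self-adjoint $h$: diagonalizing $h$ in an $H$-orthonormal basis of $V$ (possible because its eigenvalues are real) reduces $\mathrm{Im}\,H(h\,\cdot\,,\cdot)$ to a block-diagonal alternating form whose Pfaffian, relative to that of $\mathrm{Im}\,H$, is the product of the eigenvalues of $h$. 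Taking $h=1+t\pi_X$, whose eigenvalues are $1+t$ ($n$ times) and $1$ ($g-n$ times), gives $\det_\CC(1+t\pi_X)=(1+t)^n$ and hence the displayed generating identity, using $\chi(\cL)=\mathrm{Pf}(\mathrm{Im}\,H)$.

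Finally I would expand both sides in $t$. Writing $\big((\cL+t\alpha_X)^g\big)=\sum_{r=0}^g\binom{g}{r}(\alpha_X^r\cdot\cL^{g-r})\,t^r$ and $g!\,\chi(\cL)(1+t)^n=g!\,\chi(\cL)\sum_{r=0}^n\binom{n}{r}t^r$, comparison of the coefficient of $t^r$ yields $\binom{g}{r}(\alpha_X^r\cdot\cL^{g-r})=g!\,\chi(\cL)\binom{n}{r}$. For $r\ge n+1$ the right-hand side vanishes, giving the second case; for $1\le r\le n$ one obtains $(\alpha_X^r\cdot\cL^{g-r})=\chi(\cL)\,n!\,(g-r)!/(n-r)!$, and the elementary identity $(g-r)!/(n-r)!=\prod_{i=n+1}^g(i-r)$ puts this in the stated form. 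Multiplying through by $e_X^r$ recovers the formula for $\delta_X$. The main obstacle is the Pfaffian identity of the third paragraph, which encodes how the Euler characteristic transforms under a self-adjoint twist; once it is in hand the rest is formal bookkeeping.
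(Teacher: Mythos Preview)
Your argument is correct and arrives at the same generating identity as the paper, but the route is genuinely different. The paper works on the algebraic/topological side: it computes $\deg\big(d_g(t-N_X)\big)$ in two ways, once via $\deg(\phi_\cL^{-1}\phi_{t[\cL]-\delta_X})=\big((t[\cL]-\delta_X)^g/g!\big)^2/\chi(\cL)^2$ and once by invoking \cite[Proposition~5.1.2]{bl}, which identifies this degree with the characteristic polynomial $t^{2g-2n}(t-e_X)^{2n}$ of the rational representation of $N_X$; taking square roots yields $(t[\cL]-\delta_X)^g=\chi(\cL)\,g!\,t^{g-n}(t-e_X)^n$, which is a reparametrization of your identity. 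You instead stay on the analytic side: recognising the analytic representation of $\varepsilon_X$ as the $H$-orthogonal projection $\pi_X$ and computing $\chi(\cL+t\alpha_X)$ directly via the Pfaffian formula, so that the eigenvalue count of $\pi_X$ does the work that the characteristic polynomial of $N_X$ does in the paper. Your approach is more self-contained, avoiding the citation for the characteristic polynomial and the passage through degrees (with its attendant squaring and square-rooting); the paper's approach has the advantage of making transparent the link to the converse direction in Theorem~\ref{thm3.3}, where exactly that degree computation is run backwards. The expansion and coefficient comparison at the end is identical in both.
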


\begin{proof}
It suffices to prove the first equation.
Using Riemann-Roch and Lemma \ref{l2.1}, we have that for all $t\in\ZZ$,

\begin{eqnarray}\nonumber\deg(d_g(t-N_X))&=&\deg(d_g\phi^{-1}_\mathcal{L}\phi_{t[\mathcal{L}]-\delta_X})\\
\nonumber&=&\deg(d_g\phi^{-1}_{\mathcal{L}})\deg(\phi_{t[\mathcal{L}]-\delta_X})\\
\nonumber&=&\deg(d_g\phi^{-1}_{\mathcal{L}})\left(\frac{(t[\mathcal{L}]-\delta_X)^g}{g!}\right)^2.\end{eqnarray}
and 
$$
deg(d_g\phi_\mathcal{L}^{-1})=\frac{d_g^{2g}}{\chi(\mathcal{L})^2},
$$
since $d_g^{2g}
=\deg(d_g\cdot1_A)=
\deg(d_g\phi_{\mathcal{L}}^{-1}\phi_\mathcal{L})
=\deg(d_n\phi_{\mathcal{L}}^{-1})\chi(\mathcal{L})^2.$
On the other hand, according to \cite[Proposition 5.1.2]{bl} we have
$$
\deg(d_g(t-N_X))=d_g^{2g}t^{2g-2n}(t-e_X)^{2n}
$$ 
since $t^{2g-2n}(t-e_X)^{2n}$ is the 
characteristic polynomial of the rational representation of $N_X$. Together this gives, after dividing 
by $d_g^{2g}$ and taking square roots (since for sufficiently large $t$ both sides 
are positive)
$$
(t[\mathcal{L}]-\delta_X)^g=\chi(\cL)g! t^{g-n}(t-e_X)^n.
$$
Now expanding the polynomials and comparing the coefficients gives the result.
\end{proof}

Now let the notation be as in Lemma \ref{l2.3}. In particular $Y_1$ and $Y_2$ are complementary abelian subvarieties of $(X,\cL_X)$.
For $i=1,2$ denote $\alpha_i := \varphi^{-1}(\varepsilon_{Y_i}) \in \NS_\QQ(A).$ Then we have

\begin{prop}
$$
\sum_{j=1}^r {r \choose j}(\alpha_1^j \cdot \alpha_2^{r-j} \cdot \cL^{g-r}) = 
\left\{ \begin{array}{ccc} \chi(\cL) n! \prod_{i=n+1}^g (i-r) & for & r \leq n\\
                                      0 & for & r \geq n+1
           \end{array} \right.
$$
\end{prop}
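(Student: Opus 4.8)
The plan is to relate the left-hand side to the intersection number $((\alpha_1+\alpha_2)^r \cdot \cL^{g-r})$ and then apply the previous proposition. The key observation is that by Lemma \ref{l2.3}(b), the symmetric idempotent of $X$ decomposes as $\varepsilon_X = \varepsilon_{Y_1} + \varepsilon_{Y_2}$. Applying the isomorphism $\varphi^{-1}$, which is $\QQ$-linear, yields
$$
\alpha_X = \varphi^{-1}(\varepsilon_X) = \varphi^{-1}(\varepsilon_{Y_1}) + \varphi^{-1}(\varepsilon_{Y_2}) = \alpha_1 + \alpha_2
$$
in $\NS_\QQ(A)$. So the complementary decomposition of $X$ induces an additive decomposition of the numerical class $\alpha_X$.

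Next I would expand $\alpha_X^r = (\alpha_1 + \alpha_2)^r$ using the binomial theorem. Since the intersection product on $\NS_\QQ(A)$ is symmetric and multilinear, the classes $\alpha_1$ and $\alpha_2$ behave formally like commuting elements, so
$$
(\alpha_X^r \cdot \cL^{g-r}) = \left( (\alpha_1 + \alpha_2)^r \cdot \cL^{g-r}\right) = \sum_{j=0}^r {r \choose j} (\alpha_1^j \cdot \alpha_2^{r-j} \cdot \cL^{g-r}).
$$
The sum in the statement runs from $j=1$ to $r$, so I would need to account for the missing $j=0$ term, namely ${r \choose 0}(\alpha_2^r \cdot \cL^{g-r}) = (\alpha_2^r \cdot \cL^{g-r})$. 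Here is where the dimension bookkeeping matters: writing $n_i = \dim Y_i$, the second proposition (applied to $Y_2$ inside $A$) gives $(\alpha_2^r \cdot \cL^{g-r}) = \chi(\cL)\, n_2!\prod_{i=n_2+1}^g (i-r)$ when $r \le n_2$ and $0$ when $r \ge n_2+1$. The product $\prod_{i=n_2+1}^g(i-r)$ contains the factor $(i-r)$ with $i$ ranging up to $g$; whether this vanishes depends on how $r$ compares to $n_2$.

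The main obstacle is reconciling the dimension ranges. The right-hand side of the present statement is expressed in terms of $n = \dim X = n_1 + n_2$, whereas the $j=0$ correction term is governed by $n_2 = \dim Y_2$. The clean route is to combine two instances of the earlier proposition: apply it once to $X$ to get $(\alpha_X^r\cdot\cL^{g-r})$, and once to $Y_2$ to get the $j=0$ term, then subtract. For $r \le n_2$ one checks algebraically that $n!\prod_{i=n+1}^g(i-r) = \big[\text{sum from } j=0\big] - n_2!\prod_{i=n_2+1}^g(i-r)$ collapses to the stated formula after cancellation; for $n_2 < r \le n$ the $j=0$ term already vanishes so the sum from $j=1$ simply equals $(\alpha_X^r \cdot \cL^{g-r})$, which is the desired value; and for $r \ge n+1$ both the full intersection number and the correction term vanish, giving $0$. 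I expect this final case-check—verifying that the subtracted $j=0$ contribution produces exactly the claimed closed form across all three ranges of $r$—to be the only place requiring genuine care, and it is a routine comparison of the polynomial expressions furnished by Proposition \ref{intersection}.
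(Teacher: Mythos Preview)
Your core approach is correct and matches the paper's: use $\alpha_X = \alpha_1 + \alpha_2$ (from Lemma~\ref{l2.3}(b) via the linear isomorphism $\varphi^{-1}$), expand $(\alpha_1+\alpha_2)^r$ by multilinearity and symmetry of the intersection product, and invoke Proposition~\ref{intersection} for $(\alpha_X^r\cdot\cL^{g-r})$. In fact your formulation is slightly cleaner than the paper's, which phrases the binomial expansion inside $\End_\QQ(A)$ (using that $\varepsilon_{Y_1},\varepsilon_{Y_2}$ commute) and then says ``applying $\varphi^{-1}$''; that step really amounts to the same multilinearity you invoke directly in $\NS_\QQ(A)$.

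The genuine problem is your handling of the lower summation index. The statement as printed starts at $j=1$, but the paper's own proof writes $(\varepsilon_{Y_1}+\varepsilon_{Y_2})^r = \sum_{j=0}^r\binom{r}{j}\varepsilon_{Y_1}^j\varepsilon_{Y_2}^{r-j}$ and stops there; it never subtracts a $j=0$ term. The lower limit $j=1$ is a typo for $j=0$. Your attempted reconciliation cannot work: for $1\le r\le n_2$ Proposition~\ref{intersection} applied to $Y_2$ gives
\[
(\alpha_2^r\cdot\cL^{g-r}) \;=\; \chi(\cL)\,n_2!\prod_{i=n_2+1}^g(i-r)\;\neq\;0,
\]
so subtracting this from $(\alpha_X^r\cdot\cL^{g-r})=\chi(\cL)\,n!\prod_{i=n+1}^g(i-r)$ does \emph{not} return the same expression. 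The ``routine comparison'' you anticipate in the range $r\le n_2$ therefore fails---the two products are genuinely different polynomials in $r$, and no cancellation occurs. Once you accept that the sum runs from $j=0$, your argument is complete in one line and no case analysis is needed.
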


\begin{proof} Let $\mu_X: \End_\QQ(X) \ra \End_\QQ(A)$ be the homomorphism of Lemma
\ref{l2.2}.
As complementary symmetric idempotents, $\varepsilon_{Y_1}^X$ and $\varepsilon_{Y_2}^X$
commute and so do their images $\varepsilon_{Y_1}$ and $\varepsilon_{Y_2}$ under the map 
$\mu_X$. This implies that 
$$
\varepsilon_X^r = (\varepsilon_{Y_1} + \varepsilon_{Y_2})^r = \sum_{j=0}^r {r \choose j} \varepsilon_{Y_1}^j \varepsilon_{Y_2}^{r-j}.
$$
Applying the map $\varphi^{-1}$, the assertion follows from Proposition \ref{intersection}.
\end{proof}

In \cite[Theorem 2.6]{robert}, it is shown that in the case of a principally polarized abelian variety 
the 
above intersection numbers characterize the classes that come from abelian subvarieties.  The problem when $\mathcal{L}$ is not a principal polarization, is that $\delta_X$ is not necessarily primitive. We can fix this, however, by considering symmetric idempotents instead of norm endomorphisms to get the following result.

\begin{thm}  \label{thm3.3}
Given a polarized abelian variety $(A,\cL)$ of dimension $g$.
The map $X\mapsto\alpha_X$ gives a bijection between the sets of
\begin{enumerate}
\item abelian subvarieties of dimension $n$ on $A$ and
\item classes $\alpha\in\mbox{NS}_{\QQ}(A)$ that satisfy
$$
(\alpha^r\cdot\mathcal{L}^{g-r})=\left\{\begin{array}{lcl}
                                            \chi(\mathcal{L}) n! \prod_{i=n+1}^{g}(i-r) &if&1\leq r\leq n\\                  
                                            0 &if& r \geq n+1.
                                            \end{array}\right.
$$
\end{enumerate}
\end{thm}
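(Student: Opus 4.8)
The plan is to verify the three ingredients of a bijection separately: that the map lands in the target set, that it is injective, and that it is surjective. The first two are short. Well-definedness is precisely the second formula of Proposition \ref{intersection}, which says that $\alpha_X = \varphi^{-1}(\varepsilon_X)$ has the displayed intersection numbers. For injectivity I would invoke the bijective correspondence between abelian subvarieties and symmetric idempotents from \cite[Section 5.3]{bl}: if $\alpha_X = \alpha_{X'}$, then applying the isomorphism $\varphi$ of \eqref{eq2.1} gives $\varepsilon_X = \varepsilon_{X'}$, and since $\Ima(\varepsilon_X) = X$ for every abelian subvariety, this forces $X = X'$.

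All the work is in surjectivity. Given a class $\alpha$ satisfying the intersection conditions, I would set $\varepsilon := \varphi(\alpha) \in \End_\QQ^s(A)$ and aim to show that $\varepsilon$ is a symmetric idempotent whose image has dimension $n$; then $\varepsilon = \varepsilon_X$ for $X = \Ima(\varepsilon)$ by \cite[Section 5.3]{bl}, and $\alpha = \varphi^{-1}(\varepsilon) = \alpha_X$. The first step is to observe that the prescribed intersection numbers are equivalent to the single polynomial identity
$$
(t[\cL] - \alpha)^g = \chi(\cL)\, g!\, t^{g-n}(t-1)^n,
$$
obtained by expanding $(t[\cL]-\alpha)^g = \sum_{r} \binom{g}{r}(-1)^r t^{g-r}(\alpha^r \cdot \cL^{g-r})$ and comparing coefficients; this is exactly the computation that produced the intersection numbers in Proposition \ref{intersection}, now read in reverse. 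Running the degree computation of that proof with $\alpha$ in place of $\delta_X$, using $\deg(\phi_\cL^{-1}) = \chi(\cL)^{-2}$ and Riemann--Roch (legitimate for large $t$, where $t[\cL]-\alpha$ is ample, and hence as a polynomial identity for all $t$), I obtain
$$
\deg(t\cdot 1_A - \varepsilon) = \deg(\phi_\cL^{-1})\left(\frac{(t[\cL]-\alpha)^g}{g!}\right)^2 = t^{2(g-n)}(t-1)^{2n}.
$$
Thus the characteristic polynomial of the rational representation of $\varepsilon$ is $t^{2(g-n)}(t-1)^{2n}$, so that of the analytic representation $\rho_a(\varepsilon)$ is $t^{g-n}(t-1)^n$, with eigenvalues $0$ and $1$ of multiplicities $g-n$ and $n$.

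The main obstacle is that these eigenvalues alone do not force $\varepsilon^2 = \varepsilon$, since a priori $\varepsilon$ could have a nonzero nilpotent part; this is exactly where the hypothesis $\varepsilon \in \End_\QQ^s(A)$ must be used. Because the Rosati involution is positive and $\varepsilon$ is symmetric, $\rho_a(\varepsilon)$ is self-adjoint with respect to the positive definite Hermitian form defining $\cL$, and is therefore diagonalizable over $\RR$. Being diagonalizable with eigenvalues in $\{0,1\}$, $\rho_a(\varepsilon)$ is a projection, so $\rho_a(\varepsilon^2 - \varepsilon) = \rho_a(\varepsilon)^2 - \rho_a(\varepsilon) = 0$; as $\rho_a$ is faithful on $\End_\QQ(A)$, this yields $\varepsilon^2 = \varepsilon$. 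Hence $\varepsilon$ is a symmetric idempotent with $\dim \Ima(\varepsilon) = \rk \rho_a(\varepsilon) = n$, and by \cite[Section 5.3]{bl} it equals $\varepsilon_X$ for the abelian subvariety $X = \Ima(\varepsilon)$ of dimension $n$, so $\alpha = \alpha_X$. I expect the characteristic-polynomial computation to be routine once the reverse reading of Proposition \ref{intersection} is set up, and the diagonalizability argument via positivity of Rosati to be the conceptual crux that the non-principal case shares with \cite[Theorem 2.6]{robert}.
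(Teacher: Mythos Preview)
Your proof is correct and follows the same strategy as the paper's: verify the forward direction via Proposition~\ref{intersection}, then for surjectivity compute the characteristic polynomial of $\varphi(\alpha)$ from the intersection hypotheses and use Rosati symmetry to deduce idempotency. The only differences are cosmetic: the paper first clears denominators by an integer $q$ so as to work with an honest endomorphism $q\varphi(\alpha)\in\End(A)$ before computing $\deg(t-q\varphi(\alpha))=t^{2g-2n}(t-q)^{2n}$, and it outsources the semisimplicity step to \cite[Lemma~2.7]{robert} rather than arguing directly, as you do, via self-adjointness of the analytic representation with respect to the Hermitian form.
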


\begin{proof}
Let $X$ be an abelian subvariety of dimension $n$ of $A$. According to Proposition \ref{intersection}, 
$\alpha_X$ satisfies the equations in (2).
Conversely, let $\alpha\in\mbox{NS}_{\QQ}(A)$ satisfy the equations in (2). Let $q$ be a pöositive 
integer such that $q\varphi(\alpha)\in\mbox{End}(A)$. Then
\begin{eqnarray}\nonumber
\deg(t-q\varphi(\alpha))&=&\frac{1}{\chi(\mathcal{L})^2g!^2}((t[\mathcal{L}]-q\alpha)^g)^2\\
\nonumber&=&\left(\frac{1}{\chi(\mathcal{L})g!}\sum_{r=0}^g {g \choose r}(-q)^r
t^{g-r}(\alpha^r\cdot\mathcal{L}^{n-r})\right)^2\\
\nonumber&=&\left(\frac{1}{\chi(\mathcal{L})g!}\sum_{r=0}^n \frac{g!}{r! (g-r)!}(-q)^r
t^{g-r} \chi(\cL) n! (n+1-r) \cdots (g-r)\right)^2\\
\nonumber&=&\left( t^{g-n} \sum_{r=0}^n \frac{n!}{r!(n-r)!} (-q)^r t^{n-r}\right)^2 
= t^{2g-2n}(t-q)^{2n}
\end{eqnarray}
Since this is the characteristic polynomial of $q\varphi(\alpha)$ and $q\varphi(\alpha)$ is 
symmetric, then $q\varphi(\alpha)$ satisfies the equation $(q\varphi(\alpha))(q\varphi(\alpha)-
q)=0$ (see \cite[Lemma 2.7]{robert}). Therefore $\varphi(\alpha)^2=\varphi(\alpha)$, and 
so $\varphi(\alpha)$ is the symmetric idempotent associated to the abelian subvariety 
$X :=\mbox{image}(q\varphi(\alpha))$.
This completes the proof of the theorem, since clearly both maps are inverse to each other.
\end{proof}

Even if one finds many elliptic curves on an abelian variety, it is not always easy to tell whether the abelian variety decomposes as the product of these curves. For example, if $E$ and $E'$ are isogenous elliptic curves and $X$ is a simple abelian surface, then $E\times E'\times X$ has infinitely many elliptic curves, but does not split as the product of elliptic curves. Using the previous theorem, we can determine when this occurs. We first prove two lemmas.

\begin{lem}\label{l3.4}
If $(A,\mathcal{L})$ is a polarized abelian variety of dimension $g$ and $E_1,\ldots, E_g\subseteq A$ are elliptic curves with complementary abelian subvarieties $Z_1,\ldots,Z_g$, then the addition map 
$$
a:E_1\times\cdots\times E_g\to A
$$ 
is an isogeny if and only if the line bundle $\mathcal{O}_A(Z_1)\otimes\cdots\otimes\mathcal{O}_A(Z_g)$ is ample.
\end{lem}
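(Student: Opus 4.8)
The plan is to pass to tangent spaces at the origin and reduce both conditions to statements about the complex vector space $V = T_0A$ in which $A = V/\Lambda$. Set $V_i := T_0 E_i$ and $W_i := T_0 Z_i$, so that $\dim_{\CC} V_i = 1$ and $\dim_{\CC} W_i = g-1$. Since the source $E_1 \times \cdots \times E_g$ and the target $A$ both have dimension $g$, the addition map $a$ is an isogeny if and only if its differential at the origin, the summation map $\bigoplus_i V_i \to V$, is an isomorphism, i.e. if and only if $V_1 + \cdots + V_g = V$. This disposes of the left-hand side.

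For the right-hand side I would first observe that each $Z_i$, being an abelian subvariety of codimension one, is the reduced fibre over $0$ of the quotient homomorphism $\pi_i \colon A \to A/Z_i$ onto an elliptic curve, so that $\mathcal{O}_A(Z_i) \cong \pi_i^*\mathcal{O}_{A/Z_i}(0)$ is the pullback of an ample line bundle. Hence its Hermitian form $H_i$ is positive semidefinite with $\ker H_i = \ker d\pi_i = W_i$. The form of $M := \mathcal{O}_A(Z_1)\otimes\cdots\otimes\mathcal{O}_A(Z_g)$ is then $H_1 + \cdots + H_g \geq 0$, and since a line bundle on an abelian variety is ample exactly when its Hermitian form is positive definite, and the kernel of a sum of positive semidefinite forms equals the intersection of the kernels, $M$ is ample if and only if $\bigcap_{i=1}^g W_i = 0$.

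It remains to link $\sum_i V_i = V$ with $\bigcap_i W_i = 0$, and this is where the polarization is essential. Because $E_i$ and $Z_i$ are complementary, the idempotent $\varepsilon_{E_i} \in \End_\QQ^s(A)$ is symmetric for the Rosati involution, so its analytic representation $p_i$ is self-adjoint with respect to the Hermitian form $H_{\cL}$ of $\cL$; being an idempotent with image $V_i$ and kernel $W_i$, it is therefore the $H_\cL$-orthogonal projection onto $V_i$, whence $W_i = V_i^{\perp}$. Taking orthogonal complements gives $\bigcap_i W_i = \bigcap_i V_i^{\perp} = (\sum_i V_i)^{\perp}$, so that $\bigcap_i W_i = 0$ if and only if $\sum_i V_i = V$. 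Chaining the three equivalences proves the lemma.

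The step I expect to be most delicate is the clean identity $W_i = V_i^{\perp}$: it rests on the standard fact that the analytic representation carries the Rosati involution to the adjoint for $H_\cL$, and one must check that the merely rational idempotent $\varepsilon_{E_i}$, which need not lie in $\End(A)$, nonetheless has a well-defined and self-adjoint analytic representation. Confirming $\ker H_i = W_i$ rather than just $\ker H_i \supseteq W_i$ is routine once $\mathcal{O}_A(Z_i)$ has been recognised as pulled back from the elliptic quotient $A/Z_i$.
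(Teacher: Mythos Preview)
Your argument is correct, and it follows a genuinely different line from the paper's own proof. The paper argues in the language of intersection theory: for the non-isogeny direction it restricts $\mathcal{S}$ to the complementary subvariety $Y$ of the image of $a$ (which is contained in every $Z_i$) and observes $\mathcal{S}|_Y\simeq\cO_Y$; for the isogeny direction it shows that the $Z_i$ intersect one another properly, computes $(\mathcal{S}^r\cdot\cL^{g-r})$ as a positive sum of restrictions of powers of $\cL$, and concludes by the Nakai--Moishezon criterion for abelian varieties. Your proof instead linearises everything via the Appell--Humbert data: you translate the isogeny condition to $\sum_i V_i = V$, translate ampleness of $\mathcal{S}$ to positive definiteness of $\sum_i H_i$ and hence to $\bigcap_i W_i = 0$, and bridge the two through the identity $W_i = V_i^{\perp}$ coming from self-adjointness of the symmetric idempotent. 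What this buys you is a clean, polarization-intrinsic reason for the equivalence that avoids any subtlety about proper intersections of the $Z_I$; what the paper's approach buys is that it stays entirely within the intersection-number formalism that the surrounding section (Theorem~\ref{thm3.3}, Corollary~\ref{split}) is built on, so the lemma slots directly into the numerical criteria being developed. Your flagged worry about the analytic representation of the merely rational idempotent $\varepsilon_{E_i}$ is harmless: the analytic representation $\End(A)\to\End_\CC(V)$ tensors up to $\End_\QQ(A)$, and the compatibility of Rosati with the $H_\cL$-adjoint (e.g.\ \cite[Prop.~5.1.1]{bl}) is a statement about this $\QQ$-algebra, so self-adjointness of $\varepsilon_{E_i}$ is immediate.
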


\begin{proof} 
Set $\mathcal{S}:=\mathcal{O}_A(Z_1)\otimes\cdots\otimes\mathcal{O}_A(Z_g)$, let $X$ be the image of $a$ and let $Y$ be the complementary abelian subvariety of $X$. Clearly $Y\subseteq Z_i$ for every $i$ and $\mathcal{S}|_Y \simeq \cO_Y$. If $a$ is not an isogeny, i.e. 
$Y\neq\{0\}$, then $\mathcal{S}|_Y$ is not ample, and therefore $\mathcal{S}$ is not ample. 

Assume now that $a$ is an isogeny. We see that for all $I=\{i_1,\ldots,i_r\}\subsetneq\{1,\ldots,g\}$ and all $j\notin\{i_1,\ldots,i_r\}$, $Z_j$ intersects $Z_I:=Z_{i_1}\cap\cdots\cap Z_{i_r}$ properly.  Therefore, 
$$(\mathcal{S}^r\cdot\mathcal{L}^{n-r})=\sum_{\stackrel{I\subseteq\{1,\ldots,n\}}{\# I=r}}(\mathcal{L}^{n-r}|_{Z_I}),$$
which is positive since $\mathcal{L}$ is ample. By the Nakai-Moishezon Criterion for abelian varieties (\cite[Corollary 4.3.3]{bl}), we get that $\mathcal{S}$ is ample.
\end{proof}

\begin{lem} \label{l3.5}
If $E$ is an elliptic curve on $A$ with complementary abelian subvariety $Z$, then $\alpha_E$ is a 
positive multiple of $[\mathcal{O}_A(Z)]$, where $[\mathcal{O}_A(Z)]$ denotes the numerical class of the line bundle $\mathcal{O}_A(Z)$,
\end{lem}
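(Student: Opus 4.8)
The plan is to transport the divisor class through the isomorphism $\varphi$ of \eqref{eq2.1} and to recognise the resulting symmetric endomorphism. Since $\alpha_E=\varphi^{-1}(\varepsilon_E)$, it is enough to prove that $\beta:=\varphi([\mathcal{O}_A(Z)])=\phi_{\cL}^{-1}\phi_{\mathcal{O}_A(Z)}$ is a positive rational multiple of the idempotent $\varepsilon_E$; as $\varphi$ maps into $\End_\QQ^s(A)$, the endomorphism $\beta$ is automatically symmetric for the Rosati involution.

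First I would show that $\beta$ annihilates $Z$ and has image contained in $E$. The geometric input is that the normal bundle of the abelian subvariety $Z$ in $A$ is trivial --- the tangent bundle of $A$ is trivial and the translates of $Z$ cover $A$ --- so that $\mathcal{O}_A(Z)|_Z\simeq\mathcal{O}_Z$ is numerically trivial. Consequently $\phi_{\mathcal{O}_A(Z)}\,\iota_Z=\widehat{\iota_Z}\,\phi_{\mathcal{O}_A(Z)|_Z}=0$, whence $\beta\,\iota_Z=0$. Using the symmetry of $\beta$ in the form $\phi_{\cL}\beta=\widehat\beta\,\phi_{\cL}$ together with $\beta\iota_Z=0$, I get $\widehat{\iota_Z}\,\phi_{\cL}\,\beta=\widehat{\iota_Z}\,\widehat\beta\,\phi_{\cL}=\widehat{(\beta\iota_Z)}\,\phi_{\cL}=0$, so $\im\beta$ lies in the connected kernel of $\widehat{\iota_Z}\,\phi_{\cL}$, which is exactly the subvariety $E$ complementary to $Z$.

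Next I would carry out a dimension count. Any symmetric $\gamma$ with $\gamma|_Z=0$ and $\im\gamma\subseteq E$ satisfies $\varepsilon_E\gamma=\gamma$ (its image lies in $E=\im\varepsilon_E$) and $\gamma\varepsilon_E=\gamma$ (it kills $Z=\im\varepsilon_Z$ and $\varepsilon_E+\varepsilon_Z=1_A$ by Lemma \ref{l2.3}), so $\gamma=\varepsilon_E\gamma\varepsilon_E$ lies in the corner $\varepsilon_E\End_\QQ(A)\varepsilon_E$. By Lemma \ref{l2.2} this corner is identified via $\mu_E$ with $\End_\QQ(E)$, which for an elliptic curve is either $\QQ$ or an imaginary quadratic field. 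Since $\varepsilon_E$ is symmetric, the corner is stable under the Rosati involution and the induced involution is again positive; a positive involution of a field of degree at most $2$ fixes exactly $\QQ$. Hence the symmetric elements of the corner form a one-dimensional $\QQ$-space, necessarily $\QQ\,\varepsilon_E$. As $\beta$ is such an element, $\beta=c\,\varepsilon_E$ and therefore $[\mathcal{O}_A(Z)]=c\,\alpha_E$ for some $c\in\QQ$.

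Finally I would fix the sign by pairing with $\cL^{g-1}$. Because $Z$ is an effective divisor and $\cL$ is ample, $([\mathcal{O}_A(Z)]\cdot\cL^{g-1})=(g-1)!\,\chi(\cL|_Z)>0$, while Proposition \ref{intersection} with $n=r=1$ gives $(\alpha_E\cdot\cL^{g-1})=(g-1)!\,\chi(\cL)>0$; comparing these forces $c>0$, so $\alpha_E=\tfrac1c[\mathcal{O}_A(Z)]$ is a positive multiple of $[\mathcal{O}_A(Z)]$. I expect the main obstacle to be the second paragraph: verifying cleanly that $\mathcal{O}_A(Z)|_Z$ is numerically trivial and that symmetry pins the image of $\beta$ inside $E$. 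Once the kernel and image of $\beta$ are correctly identified, the corner argument and the positivity check are routine.
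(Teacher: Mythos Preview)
Your argument is correct but takes a genuinely different route from the paper's. The paper proceeds geometrically: it uses Lemma~\ref{l2.1} to reduce to showing $N_E^*\cL$ is a positive multiple of $\cO_A(Z)$, then factors the norm map as $N_E:A\xrightarrow{p}E\xrightarrow{\iota}A$ (using $A/Z\simeq\widehat{E}\simeq E$), so that $N_E^*\cL=p^*(\cL|_E)\equiv m\,p^*\cO_E([0])=m\,\cO_A(Z)$ with $m=\deg(\cL|_E)>0$. This is short and yields the explicit constant $\alpha_E=\tfrac{m}{e_E^2}[\cO_A(Z)]$ (in fact $m=e_E$, so $\alpha_E=\tfrac{1}{e_E}[\cO_A(Z)]$).

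Your approach is algebraic: you push $[\cO_A(Z)]$ through $\varphi$ and argue that the resulting symmetric endomorphism $\beta$ kills $Z$ and lands in $E$, forcing $\beta$ into the corner $\varepsilon_E\End_\QQ(A)\varepsilon_E\cong\End_\QQ(E)$; since the Rosati-fixed part of $\End_\QQ(E)$ is $\QQ$, you conclude $\beta\in\QQ\,\varepsilon_E$ and fix the sign by intersecting with $\cL^{g-1}$. This is longer but conceptually interesting: it isolates exactly where the hypothesis $\dim E=1$ enters (namely, that $\End_\QQ^s(E)=\QQ$), and the kernel/image analysis of $\beta$ via the triviality of $\cO_A(Z)|_Z$ would extend to higher-codimension subvarieties if one replaced $\cO_A(Z)$ by an appropriate class. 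The trade-off is that you lose the explicit constant and must do more bookkeeping (the positivity of the induced involution on the corner, the identification of $(\ker\widehat{\iota_Z}\phi_\cL)^0$ with $E$) that the paper's direct pullback computation sidesteps entirely.
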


\begin{proof}
By Lemma~\ref{l2.1} it is clearly sufficient to prove that $N_E^*\mathcal{L}$ is algebraically equivalent to a positive multiple of $\mathcal{O}_A(Z)$. 
Now according to \cite[Proposition 12.1.3]{bl} and since an elliptic curve is self dual, we can identify
$A/Z = \widehat E = E$ and the norm map $N_E$ factors as
$$
N_E: A \stackrel{p}{\ra} E \stackrel{\iota}{\hra} A.
$$
This gives $N_E^*\cL =p^*(\cL|_E)$. Since $E$ is an elliptic curve, we have  
$\mathcal{L}|_E\equiv m\mathcal{O}_{\tilde{E}}([0])$ for some positive integer $m$, where $[0]$ 
denotes the origin of ${E}$. Therefore, 
$$
N_E^*\mathcal{L}\equiv p^*\mathcal{L}|_E\equiv mp^*\mathcal{O}_{{E}}([0])\simeq m\mathcal{O}_A(Z),
$$
since $Z$ is the kernel of the projection $p$.
\end{proof}

\begin{cor}\label{split}
A polarized abelian variety $(A,\mathcal{L})$ of dimension $g$ splits isogenously as the product of elliptic curves if and only if there exist $\alpha_1,\ldots,\alpha_g\in\mbox{NS}_{\QQ}(A)$ such that
$$
(\alpha_i^r\cdot\mathcal{L}^{g-r})=\left\{\begin{array}{lcl}
                                            \chi(\mathcal{L}) \prod_{i=1}^{g}(i-r) &if&r=1\\                  
                                            0 &if& r \geq 2.
                                            \end{array}\right.
$$
and such that $\alpha_1+\cdots+\alpha_g$ is an ample $\QQ$-class.
\end{cor}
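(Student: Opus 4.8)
The plan is to characterize isogenous splitting into elliptic curves via the existence of a complete system of pairwise complementary elliptic subvarieties, and then translate this into the stated numerical conditions using the two preceding lemmas. First I would observe that $(A,\cL)$ splits isogenously as a product of elliptic curves if and only if there exist elliptic curves $E_1,\dots,E_g\subseteq A$ whose addition map $a\colon E_1\times\cdots\times E_g\to A$ is an isogeny. The forward direction of the corollary then proceeds as follows: given such $E_1,\dots,E_g$, I let $Z_i$ be the complementary abelian subvariety of $E_i$ in $(A,\cL)$ and set $\alpha_i:=\alpha_{E_i}=\varphi^{-1}(\varepsilon_{E_i})$. Since each $E_i$ has dimension $1$, Theorem~\ref{thm3.3} (with $n=1$) gives precisely the intersection numbers displayed in the corollary. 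By Lemma~\ref{l3.5}, each $\alpha_i$ is a positive rational multiple of $[\cO_A(Z_i)]$, and by Lemma~\ref{l3.4} the ampleness of $\cO_A(Z_1)\otimes\cdots\otimes\cO_A(Z_g)$ is equivalent to $a$ being an isogeny; hence $\alpha_1+\cdots+\alpha_g$, being a positive combination of the $[\cO_A(Z_i)]$, is an ample $\QQ$-class.

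For the converse, I would start from classes $\alpha_1,\dots,\alpha_g\in\NS_\QQ(A)$ satisfying the given equations with $\alpha_1+\cdots+\alpha_g$ ample. The $n=1$ case of Theorem~\ref{thm3.3} applies to each $\alpha_i$: the numerical conditions are exactly those characterizing classes of the form $\alpha_X$ for an abelian subvariety $X$ of dimension $1$, so each $\alpha_i=\alpha_{E_i}$ for a unique elliptic curve $E_i\subseteq A$. Letting $Z_i$ denote the complementary abelian subvariety of $E_i$, Lemma~\ref{l3.5} again gives $\alpha_i=c_i[\cO_A(Z_i)]$ with $c_i>0$, so the ample $\QQ$-class $\sum_i\alpha_i$ is a positive combination of the $[\cO_A(Z_i)]$. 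The goal is then to deduce from this ampleness that the addition map $a\colon E_1\times\cdots\times E_g\to A$ is an isogeny, which by Lemma~\ref{l3.4} is equivalent to the ampleness of $\cS:=\cO_A(Z_1)\otimes\cdots\otimes\cO_A(Z_g)$.

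The main obstacle is the passage between the ampleness of the \emph{positive-coefficient} $\QQ$-combination $\sum_i c_i[\cO_A(Z_i)]$ and the ampleness of the \emph{honest tensor product} $\cS=\bigotimes_i\cO_A(Z_i)$ (equivalently the sum $\sum_i[\cO_A(Z_i)]$ with all coefficients $1$). Since ampleness in $\NS_\QQ$ is an open convex-cone condition and positive scaling preserves it, the class $\sum_i c_i[\cO_A(Z_i)]$ is ample if and only if each ray $\RR_{>0}[\cO_A(Z_i)]$ lies in the closed ample cone and their positive span meets the open ample cone appropriately; the cleanest route is to note that $\sum_i c_i[\cO_A(Z_i)]$ and $\sum_i[\cO_A(Z_i)]$ lie in the same open face of the effective-plus-ample configuration, so that ampleness of one forces ampleness of the other. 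I would make this precise by arguing that each $[\cO_A(Z_i)]$ is nef (being a positive multiple of the pullback class coming from the quotient $A\to A/Z_i\cong E_i$ as in the proof of Lemma~\ref{l3.5}), whence any positive $\QQ$-combination of them has the same nullity locus; ampleness of one positive combination then implies, via the Nakai--Moishezon criterion for abelian varieties, that the nullity locus is trivial, and therefore $\cS$ itself is ample. Invoking Lemma~\ref{l3.4} concludes that $a$ is an isogeny, so $(A,\cL)$ splits isogenously as $E_1\times\cdots\times E_g$.
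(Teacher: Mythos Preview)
Your approach matches the paper's exactly in overall structure: invoke Theorem~\ref{thm3.3} with $n=1$ to pass between elliptic curves and the classes $\alpha_i$, use Lemma~\ref{l3.5} to identify $\alpha_i$ with a positive rational multiple of $[\cO_A(Z_i)]$, and use Lemma~\ref{l3.4} to relate ampleness of $\cS=\bigotimes_i\cO_A(Z_i)$ with the addition map being an isogeny. The only point worth sharpening is your handling of the ampleness transfer between $\sum_i\alpha_i$ and $\sum_i[\cO_A(Z_i)]$, in both directions.

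In the forward direction you assert that $\sum_i\alpha_i$ is ample ``being a positive combination of the $[\cO_A(Z_i)]$'', but a positive combination of nef classes need not be ample; in the converse you introduce nullity loci and Nakai--Moishezon somewhat vaguely. The paper's device (spelled out in one direction and implicit in the other) is cleaner and works uniformly: since each $[\cO_A(Z_i)]$ is effective on an abelian variety it is nef, hence so is each $\alpha_i$. Writing $r_i\alpha_i=[\cO_A(Z_i)]$ with $r_i\in\QQ_{>0}$ and $r=\max_i r_i$, one has
\[
r\sum_i\alpha_i=\sum_i r_i\alpha_i+\sum_i(r-r_i)\alpha_i=(\text{ample})+(\text{nef})=\text{ample},
\]
which gives the forward direction; replacing $r$ by $\min_i r_i$ gives the reverse direction by the same ample-plus-nef argument. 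With this simplification your proof is correct and essentially identical to the paper's.
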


\begin{proof}
If $A$ splits up to isogeny as the product of sub elliptic curves $E_1,\ldots,E_g$, then, by Theorem \ref{thm3.3} and Lemma \ref{l3.4}, 
we obtain classes $\alpha_1,\ldots,\alpha_g$ that have the intersection numbers above. 
According to Lemma \ref{l3.5} we have 
$$
r_i\alpha_i=[\mathcal{O}_A(Z_i)]
$$ 
where $Z_i$ is the complementary abelian subvariety of $E_i$ in $A$ and  $r_i = \frac{1}{m_i}$
for some positive integer $m_i$. Define $r:=\max\{r_1,\ldots,r_g\}$.  Since $\delta= r_1\alpha_1+\cdots+r_g\alpha_g$ is ample by Lemma~\ref{l3.4}, then 
$$
r(\alpha_1+\cdots+\alpha_g)=\delta+(r-r_1)\alpha_1+\cdots+(r-r_g)\alpha_g
$$
is ample, being the sum of an ample class and a nef class. Therefore $\alpha_1+\cdots+\alpha_g$ is ample. 

Reciprocally, if we have $\alpha_1,\ldots,\alpha_g\in\mbox{NS}_{\QQ}(A)$ with the intersection numbers above, then Theorem \ref{thm3.3} implies that these come from elliptic curves $E_1,\ldots,E_g$ on $A$. By Lemma \ref{l3.5} there exists $r_i\in\mathbb{Z}_{>0}$ such that $r_i\alpha_i$ is the numerical cycle class of the complementary abelian subvariety of $E_i$. If $\alpha_1+\cdots+\alpha_g$ is ample, then $r_1\alpha_1+\cdots+r_g\alpha_g$ is ample (it lies within the ample cone), and by Lemma~\ref{l3.4}, $A$ splits as a product of the $E_i$.
\end{proof}

To determine when a $\QQ$-class is ample, we use the Nakai-Moishezon Criterion for abelian varieties (see \cite[Corollary 4.3.3]{bl}). This theorem states that if $\mathcal{L}$ is an ample line bundle on an abelian variety of dimension $g$ and $\alpha$ is a $\QQ$-divisor class, then $\alpha$ is ample if and only if $(\alpha^r\cdot\mathcal{L}^{g-r})>0$ for all $r=1,\ldots,n$. This, along with Corollary \ref{split}, gives us a numerical criterion for when an abelian variety splits as a product of elliptic curves.

\section{The criterion in terms of the period matrix}

Let $(A,\cL)$ be an abelian variety of dimension $g$ with polarization of type $(d_1, \dots, d_g)$. 
The N\'eron-Severi group of $A = V/\Lambda$ can be seen as a subgroup of $H^2(A,\ZZ)
\simeq  \wedge^2\ZZ^{2g}$. To be more precise, given a basis of $V$, let $\{\lambda_1,
\ldots,\lambda_g,\mu_1,
\ldots,\mu_g\}$ be a symplectic basis for $(A,\mathcal{L})$ such that with respect to these bases, 
$A$ is given by the period matrix $(D \; Z )$. If $x_1,\ldots,x_{2g}$ are the real 
coordinate functions of $\Lambda \otimes \RR$ associated to the basis of $\Lambda$ and $z_i$ the 
complex  
coordinate functions with respect to the basis of $V$, these functions are related by the equation
\begin{equation} \label{eq3.1}
\begin{pmatrix}z_1\\\vdots\\ z_g\end{pmatrix}=(D\;Z)\begin{pmatrix}x_1\\\vdots\\ 
x_{2g}\end{pmatrix}
\end{equation}
where $D=\mbox{diag}(d_1,\ldots,d_g)$. The global 1-forms $dz_1, \dots, dz_g$ form a $\CC$-
basis of the cotangent space of $A$ at 0. Since the forms 
$dz_i \wedge d\overline z_j, \; i,j = 1, \dots g$ generate the the $\CC$-vector space 
$H^{1,1}(A)$, this implies that 
$$
H^{1,1}(A) = \{ \omega \in H^2(A,\CC) \;| \; \omega \wedge dz_1 \wedge \cdots \wedge dz_g =0 \}
$$
Now the first Chern class induces an isomorphism 
$$
\NS_{\QQ}(A) \simeq H^2(A,\QQ) \cap H^{1,1}(A).
$$
Moreover,  $H^2(A,\QQ) \simeq \wedge^2 H^1(A,\QQ)$. So we can identify $H^2(A,\QQ)$ 
with respect to the above real basis as 
$$
H^2(A,\QQ) = \wedge^2 \QQ^{2g}.
$$
In other words, we consider $\{ dx_i \wedge dx_j \;|\; 1 \leq i < j \leq g \}$ as the canonical basis of $\wedge^2 \QQ^{2g}$.
So together we obtain the following identification
\begin{equation} \label{eq3.2}
\NS_{\QQ}(A) = \{\omega\in\wedge^2\QQ^{2n}:\omega\wedge dz_1\wedge\cdots\wedge dz_g=0\},
\end{equation}

With these identifications Theorem \ref{thm3.3} translates into the following theorem.

\begin{thm} \label{thm3.1}
Given a polarized abelian variety $(A,\cL)$ of dimension $g$ and type $(d_1,\dots,d_g)$.
The above identifications induce a bijection between the sets of
\begin{enumerate}
\item abelian subvarieties of $A$ of dimension $n$ and
\item differential forms $\omega\in\wedge^2\QQ^{2g}$ such that
\begin{enumerate}
\item $\omega\wedge dz_1\wedge\cdots\wedge dz_g=0$,
\item $\omega^{\wedge r}\wedge\theta^{\wedge (g-r)}=\left\{\begin{array}{ll}
\chi(\mathcal{L}) n! \prod_{i=n+1}^{g}(i-r) \cdot \omega_0&1\leq r\leq n\\0& r \geq n+1\end{array}\right.$\\
where $\theta$ is the first Chern class of $\cL$ and $\omega_0=(-1)^g dx_1\wedge dx_{g+1}\wedge\cdots\wedge dx_g\wedge dx_{2g}$.
\end{enumerate}
\end{enumerate}
\end{thm}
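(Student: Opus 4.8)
The plan is to show that Theorem \ref{thm3.1} is simply a faithful translation of Theorem \ref{thm3.3} under the identifications set up in the paragraphs preceding the statement, so that essentially no new geometric input is required. The bijection in Theorem \ref{thm3.3} is between abelian subvarieties of dimension $n$ and classes $\alpha \in \NS_\QQ(A)$ with the prescribed intersection numbers against $\cL$. The goal here is to rewrite each of the two defining conditions on $\alpha$ in terms of the differential form $\omega \in \wedge^2\QQ^{2g}$ representing $\alpha$ under the identification \eqref{eq3.2}. So the whole argument splits into two matching tasks: first, that the membership condition $\alpha \in \NS_\QQ(A)$ becomes condition (2a), and second, that the intersection-number conditions on $\alpha$ become condition (2b).

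The first task is immediate from the identification \eqref{eq3.2} itself: a rational $2$-form $\omega \in \wedge^2\QQ^{2g}$ lies in $\NS_\QQ(A) = H^2(A,\QQ)\cap H^{1,1}(A)$ precisely when $\omega\wedge dz_1\wedge\cdots\wedge dz_g = 0$, which is exactly (2a). The second, and genuinely computational, task is to reinterpret the intersection product $(\alpha^r\cdot\cL^{g-r})$ cohomologically. The key fact I would invoke is that the intersection number of divisor classes on $A$ is computed by the cup product in $H^{2g}(A,\QQ)$ followed by evaluation against the fundamental class, i.e.\ by integration of the corresponding top-degree form over $A$. Under the real coordinates $x_1,\dots,x_{2g}$, the fundamental class is represented by a fixed generator of $\wedge^{2g}\QQ^{2g}$, and the form $\omega_0 = (-1)^g\, dx_1\wedge dx_{g+1}\wedge\cdots\wedge dx_g\wedge dx_{2g}$ is precisely the normalized volume form chosen so that integration amounts to reading off the coefficient of $\omega_0$. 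Thus $(\alpha^r\cdot\cL^{g-r})$ equals the scalar $c$ for which $\omega^{\wedge r}\wedge\theta^{\wedge(g-r)} = c\,\omega_0$, where $\theta = c_1(\cL)$. Plugging in the values from Theorem \ref{thm3.3} then yields exactly the right-hand side of (2b).

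The step I expect to be the main obstacle is pinning down the precise normalization and sign in the identity $(\alpha^r\cdot\cL^{g-r}) = c$, where $\omega^{\wedge r}\wedge\theta^{\wedge(g-r)} = c\,\omega_0$. One must verify that, with the symplectic basis giving period matrix $(D\;Z)$ and with $\theta$ written in the chosen coordinates, the comparison between the topological cup-product intersection number and the wedge product of forms carries exactly the factor built into $\omega_0$, including the sign $(-1)^g$ and the correct ordering $dx_1\wedge dx_{g+1}\wedge\cdots$ that interleaves the two halves of the symplectic basis. This is where the factor $\chi(\cL) = d_1\cdots d_g$ and the type of the polarization must be tracked carefully; concretely, one computes $\theta$ in coordinates and checks that $\theta^{\wedge g}/g! = \chi(\cL)\,\omega_0$, matching the normalization used implicitly in Theorem \ref{thm3.3}.

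Once the normalization is fixed, the proof concludes by direct substitution: conditions (2a) and (2b) on $\omega$ are equivalent, class by class, to the two conditions on $\alpha$ in Theorem \ref{thm3.3}, so the bijection $X\mapsto\alpha_X$ of that theorem transports verbatim to the claimed bijection $X\mapsto\omega$. No further geometric content is needed beyond Theorem \ref{thm3.3} and the cohomological description of the intersection product, so the result follows.
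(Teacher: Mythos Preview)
Your proposal is correct and follows essentially the same approach as the paper: the paper's proof simply identifies $\theta$ explicitly via \cite[Exercise 2.6,(2b)]{bl}, asserts that the intersection product in $\NS_\QQ(A)$ corresponds to the wedge product of forms, and declares the statement a translation of Theorem~\ref{thm3.3}. Your write-up is in fact more careful than the paper about the normalization and sign conventions (the check $\theta^{\wedge g}/g! = \chi(\cL)\,\omega_0$), but the underlying argument is identical.
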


\begin{proof}
According to \cite[Exercise 2.6,(2b)]{bl}, the first Chern class of $\mathcal{L}$ corresponds to the differential form 
\begin{equation} \label{eq3.3}
\theta:=-\sum_{i=1}^nd_idx_i\wedge dx_{i+n}.
\end{equation}
Moreover, in these terms the intersection product in $\NS_{\QQ}(A)$ corresponds to the wedge product of differential forms. So the assertion is a translation of Theorem \ref{thm3.3}.
\end{proof}

\begin{exam}\label{Borowka}
In \cite[Theorem 8]{bor} a particular set of period matrices is shown that represents all principally polarized abelian varieties of dimension $g$ that contain an abelian subvariety of dimension $n$ and of type $(d_1,\ldots, d_n)$. Indeed, Bor\'owka obtains the family of matrices $Z=(z_{ij})_{i,j}$ where
$$\begin{array}{ll}z_{ij}=d_iz_{(g-n+i)j}&i,j=1,\ldots,n\\
z_{ij}=0&i=n+1,\ldots,g-n,j=1,\ldots,n\end{array}.$$
If $Z_n$ represents the principal $n\times n$ submatrix of $Z$, then putting $X=\mathbb{C}^n/(Z_n\mathbb{Z}^n+D\mathbb{Z}^n)$ with $D=\mbox{diag}(d_1,\ldots,d_n)$ and $A=\mathbb{C}^g/(Z\mathbb{Z}^g+\mathbb{Z}^g)$, we have an inclusion $X\hookrightarrow A$ whose analytic representation acts as
$$(x_1,\ldots,x_n)\mapsto (x_1,\ldots,x_n,0,\ldots,0,\frac{1}{d_1}x_1,\ldots,\frac{1}{d_n}x_n).$$
The rational representation of the norm endomorphism associated to $X$ in $A$ (with respect to the given polarization) is found to be
$$N=d_n\left(\begin{array}{cc}R&0\\0&R^t\end{array}\right)$$
where
$$R=\left(\begin{array}{cc}I_n&0\\0&0\\D^{-1}&0\end{array}\right)$$
(with the zero matrices having the appropriate size) and $D^{-1}=\mbox{diag}(d_1^{-1},\ldots,d_n^{-1})$. By \cite[Prop. 3.2]{robert}, the matrix of the alternating form associated to $X$ (with respect to the given symplectic basis) is 
$$M=\left(\begin{array}{cc}0&-I_g\\I_g&0\end{array}\right)N=d_n\left(\begin{array}{cc}0&-R^t\\R&0\end{array}\right).$$
Therefore, the differential form associated to the symmetric idempotent of $X$ is 
$$\omega_{d_1,\ldots,d_n}:=-\sum_{i=1}^n(dx_{i}\wedge dx_{g+i}+\frac{1}{d_i}dx_i\wedge dx_{g-n+i})$$
and the equations above on the period matrices can be simply represented by the equation $\omega_{d_1,\ldots,d_n}\wedge dz_1\wedge\cdots\wedge dz_g=0$. We then have that the moduli space of all principally polarized abelian varieties that contain an abelian subvariety of dimension $n$ and type $(d_1,\ldots,d_n)$ is
$$\mathcal{A}_g(\omega_{d_1,\ldots,d_n}):=\pi(\{Z\in\cH_g:\omega_{d_1,\ldots,d_n}\wedge dz_1\wedge\cdots\wedge dz_g=0\}),$$
where $\pi:\mathcal{H}_g\to\cA_g$ is the natural projection. This shows that our general approach to studying non-simple abelian varieties gives the same equations as those found in \cite{bor} when using a particular differential form. \qed
\end{exam}

Now we endow $\ZZ^{2g}$ with the canonical symplectic form and extend this form to the 
complex vector space $\CC^{2g}$. Let $x_1, \dots, x_{2g}$ denote the coordinate functions with 
respect to the canonical basis of
$\CC^{2g}$. We identify the vector space $\wedge^2 \CC^{2g}$ with the $\CC$-vector space of differential forms $\sum_{i<j} a_{ij} dx_i \wedge dx_j$ with constants $a_{ij} \in \CC$.

Let $(A,\cL)$ be a polarized abelian variety of dimension $g$ of type $(d_1,\dots,d_g)$ and suppose $A = V/\Lambda$. Given a basis of $V$ and a 
symplectic basis of $\Lambda$, we can identify
this basis with the canonical basis of $\ZZ^{2g} \subset \CC^{2g}$, such that the restrictions to the reals of the above coordinate functions $x_i$ (also denoted by $x_i$) are 
real coordinates of the complex vector space $V$. Let $(D \; Z)$ be the period matrix of $A$ with respect to the bases. We introduce complex coordinates $z_1, \dots, z_g$ of $V$
by means of equation \eqref{eq3.1}.

The matrix $Z$ is symmetric and positive definite and thus an element of the Siegel upper half 
space $\cH_g$ of rank $g$, which is a complex manifold of dimension ${g+1 \choose 2}$.
With respect to the real coordinates $x_i$  the first Chern class $\theta$ of $\cL$ is given by \eqref{eq3.3}.
With these notations we define for every positive integer $n < g$ a complex subvariety of 
$\wedge^2\CC^g \times \cH_g$ by
\begin{equation} \label{eq3.4}
\cE(g,n) := \{( \omega, Z) \in \wedge^2 \CC^{2g} \times \cH_g \;|\; \omega \; \mbox{satisfies 
equations (a) and (b) of Theorem \ref{thm3.1}} \}.
\end{equation}
If  we denote for any $Z \in \cH_g$ by $A_Z$ the polarized abelian variety defined by the period matrix $(D\; Z)$, 
then Theorem \ref{thm3.1} translates into to following corollary.

\begin{cor}
The abelian variety $A_Z$  admits an abelian subvariety of dimension $n$ if and only if there exists 
an $\omega \in \wedge^2 \QQ^{2g} \subset \wedge^2 \CC^{2g}$ such that $(\omega, Z) \in \cE(g,n).$ In other words,
$$
\{ Z \in \cH_g \;|\; A_Z \;\mbox{admits an abelian subvariety of dimension}\; n\} =
\cE(g,n) \cap (\wedge^2 \QQ^{2g} \times \cH_g).
$$
\end{cor}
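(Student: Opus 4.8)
The plan is to show that the final corollary is essentially a direct reformulation of Theorem \ref{thm3.1}, so the work consists of carefully unwinding the definitions rather than proving anything genuinely new. First I would recall the equivalence already packaged in Theorem \ref{thm3.1}: for a fixed period matrix $(D\;Z)$, abelian subvarieties of $A_Z$ of dimension $n$ are in bijection with forms $\omega \in \wedge^2\QQ^{2g}$ satisfying conditions (a) and (b) of that theorem. Thus $A_Z$ admits \emph{some} abelian subvariety of dimension $n$ if and only if there exists at least one rational form $\omega$ meeting both conditions, which is exactly the statement ``there exists $\omega \in \wedge^2\QQ^{2g}$ with $(\omega,Z) \in \cE(g,n)$'' once we observe that $\cE(g,n)$ was defined in \eqref{eq3.4} precisely as the locus of pairs $(\omega,Z)$ satisfying (a) and (b).

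The one point that requires a small argument is the passage between the rational and complex versions of the defining equations, since $\cE(g,n)$ is defined as a subvariety of $\wedge^2\CC^{2g} \times \cH_g$ but Theorem \ref{thm3.1} concerns forms with rational coefficients. Here I would note that conditions (a) and (b) are polynomial (in fact linear in (a) and multilinear in (b)) equations in the entries $a_{ij}$ of $\omega$ with coefficients depending holomorphically on $Z$; hence they make sense for $\omega \in \wedge^2\CC^{2g}$ and cut out the complex subvariety $\cE(g,n)$, while their restriction to $\omega \in \wedge^2\QQ^{2g}$ recovers exactly the rational conditions of Theorem \ref{thm3.1}. Therefore a rational $\omega$ satisfies the hypotheses of Theorem \ref{thm3.1} if and only if $(\omega,Z)$ lies in $\cE(g,n) \cap (\wedge^2\QQ^{2g} \times \cH_g)$.

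Combining these observations, I would argue as follows. Fix $Z \in \cH_g$. If $A_Z$ admits an abelian subvariety $X$ of dimension $n$, then by Theorem \ref{thm3.1} the class $\omega_X \in \wedge^2\QQ^{2g}$ associated to $X$ satisfies (a) and (b), so $(\omega_X, Z) \in \cE(g,n)$ with $\omega_X$ rational. Conversely, if there exists a rational $\omega$ with $(\omega,Z)\in\cE(g,n)$, then $\omega$ satisfies (a) and (b), and the bijection of Theorem \ref{thm3.1} produces an abelian subvariety of $A_Z$ of dimension $n$. This proves the first ``if and only if'', and the displayed set equality is the same statement rephrased: the left-hand set is the set of $Z$ for which such an $\omega$ exists, which is precisely the image under the second projection of $\cE(g,n) \cap (\wedge^2\QQ^{2g}\times\cH_g)$ described fibrewise over each $Z$.

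I do not expect any serious obstacle here, since the content is entirely contained in Theorem \ref{thm3.1} and the definition \eqref{eq3.4}; the only thing to be careful about is that the existential quantifier is handled correctly (the corollary asserts existence of a subvariety of dimension $n$, matching existence of a suitable $\omega$, rather than a bijection), and that one genuinely needs $\omega$ to be rational for Theorem \ref{thm3.1} to apply, which is why the intersection with $\wedge^2\QQ^{2g}\times\cH_g$ appears in the final set equality.
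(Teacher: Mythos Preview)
Your proposal is correct and matches the paper's treatment: the paper states this corollary immediately after Theorem \ref{thm3.1} with the remark that the theorem ``translates into'' it, giving no separate proof. Your unwinding of the definition of $\cE(g,n)$ and the existential reading of the bijection is exactly what is implicit there; your observation that the displayed equality should be read via projection to $\cH_g$ is also apt, since as written the two sides live in different ambient sets.
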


The following proposition gives explicit equations for $g = 2$. 
For the principlally polarized case  see \cite[Thm. 3.1]{robert}, or \cite[Lemma 5.3]{Kani}.
Since any polarization is a multiple
of a primitive polarization, we may assume that the polarization of $A$ is of type $(1,d_2)$ with some positive integer $d_2$.

\begin{prop}\label{p3.3}
Let $(A,\mathcal{L})$ be a polarized abelian surface of type $(1,d_2)$ with period matrix 
$Z=\left(\begin{array}{cc}\tau_{1}&\tau_2\\\tau_2&\tau_3\end{array}\right)$. Then $A$ admits a 
sub elliptic curve if and only if there exists a vector 
$(a_{12},a_{13},a_{14},a_{23},a_{24},a_{34}) \in \QQ^6$ satisfying
\begin{eqnarray*}
-d_2&=&d_2a_{13}+a_{24},\\
 0&=&(\tau_1\tau_3-\tau_2^2)a_{12} - d_2a_{14}\tau_1 + d_2a_{13}\tau_2 - a_{24}\tau_2 + a_{23}\tau_3 + d_2a_{34} \; and \\
0&=&a_{14}a_{23}-a_{13}a_{24}+a_{12}a_{34}.\\
\end{eqnarray*}
\end{prop}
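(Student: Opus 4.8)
The plan is to read the statement off as the $g=2$, $n=1$ specialization of Theorem \ref{thm3.1} (equivalently, of the corollary immediately preceding this proposition), since a sub elliptic curve is precisely an abelian subvariety of dimension $1$. Thus $A$ admits a sub elliptic curve if and only if there is a form $\omega = \sum_{1 \le i < j \le 4} a_{ij}\, dx_i \wedge dx_j \in \wedge^2 \QQ^4$ satisfying conditions (a) and (b) of Theorem \ref{thm3.1} with $g=2$, $n=1$, and the six coefficients $a_{ij}$ are exactly the entries of the sought vector. So the whole task reduces to writing out those conditions explicitly for type $(1,d_2)$.

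First I would record the relevant constants. Since $\cL$ has type $(1,d_2)$, Riemann-Roch gives $\chi(\cL)=d_2$, and \eqref{eq3.3} gives $\theta = -(dx_1 \wedge dx_3 + d_2\, dx_2 \wedge dx_4)$. For $g=2$ the top form is $\omega_0 = dx_1 \wedge dx_3 \wedge dx_2 \wedge dx_4 = -\,dx_1 \wedge dx_2 \wedge dx_3 \wedge dx_4$. Condition (b) splits into two cases: $r=1$ gives $\omega \wedge \theta = \chi(\cL)\cdot 1!\cdot(2-1)\cdot\omega_0 = d_2\,\omega_0$, while $r=2=n+1$ gives $\omega \wedge \omega = 0$. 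Condition (a) is $\omega \wedge dz_1 \wedge dz_2 = 0$.

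Next I would carry out the three wedge-product computations in $\wedge^\bullet\CC^4$, each of which lands in the one-dimensional top space and hence yields a single scalar equation. Expanding $\omega \wedge \theta$ and comparing with $d_2\,\omega_0 = -d_2\,dx_1 \wedge dx_2 \wedge dx_3 \wedge dx_4$ produces $d_2 a_{13} + a_{24} = -d_2$, the first displayed equation. For $\omega \wedge \omega$ the coefficient of the volume form is twice the Pfaffian $a_{12}a_{34} - a_{13}a_{24} + a_{14}a_{23}$, so its vanishing is the third displayed equation. For condition (a) I would first compute from the period matrix $(D\;Z)$ via \eqref{eq3.1} that $dz_1 = dx_1 + \tau_1\,dx_3 + \tau_2\,dx_4$ and $dz_2 = d_2\,dx_2 + \tau_2\,dx_3 + \tau_3\,dx_4$, hence $dz_1 \wedge dz_2 = d_2\,dx_1 \wedge dx_2 + \tau_2\,dx_1 \wedge dx_3 + \tau_3\,dx_1 \wedge dx_4 - d_2\tau_1\,dx_2 \wedge dx_3 - d_2\tau_2\,dx_2 \wedge dx_4 + (\tau_1\tau_3-\tau_2^2)\,dx_3 \wedge dx_4$; wedging with $\omega$ and collecting the coefficient of the volume form yields the second displayed equation.

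The conceptual content lies entirely in Theorem \ref{thm3.1}, so there is no real obstacle beyond disciplined multilinear bookkeeping. The one point requiring care is the sign of $\omega_0$ relative to the standard orientation $dx_1 \wedge dx_2 \wedge dx_3 \wedge dx_4$: the transposition of $dx_3$ and $dx_2$ contributes the minus sign that makes the right-hand side of the first equation $-d_2$ rather than $+d_2$. More generally, one must track the sign produced when reordering each $dx_i \wedge dx_j \wedge dx_k \wedge dx_l$ into increasing order in the pairing $\omega \wedge (dz_1 \wedge dz_2)$, but these are routine inversion counts.
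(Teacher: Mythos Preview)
Your proposal is correct and follows exactly the paper's approach: specialize Theorem \ref{thm3.1} to $g=2$, $n=1$ and expand conditions (a) and (b) explicitly. Your attribution of the three displayed equations to (b) with $r=1$, to (a), and to (b) with $r=2$, respectively, is in fact more accurate than the paper's own one-line proof, which mislabels the first equation as coming from (a).
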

\begin{proof}
This comes from writing out the previous conditions. The first equation is (a) and the second and 
third equation (b) for $r=1$ and 2 of Theorem \ref{thm3.1}.
\end{proof}

Similarly the equations for $g=3$ can be given explicitly. Again we assume that the polarization is 
primitive, i.e. $d_1=1$. 

\begin{prop}  \label{p3.4}
Let $(A,\mathcal{L})$ be a principally polarized abelian threefold of type $(1,d_2,d_3)$ with period matrix $Z=(\tau_{ij})_{ij}$. Then $A$ admits a sub elliptic curve if and only if there exists a vector $(a_{ij})_{1\leq i<j\leq 6}\in\QQ^{15}$ such that
\newpage
\begin{eqnarray*}
 0&=&-a_{16}d_2d_3\tau_{11} + a_{14}d_2d_3\tau_{13} + a_{46}d_2d_3 - a_{26}d_3\tau_{12} - a_{36}d_2\tau_{13} +
a_{24}d_3\tau_{23} + \\
&& a_{34}d_2\tau_{33} - (d_3\tau_{12}\tau_{13} - d_3\tau_{11}\tau_{23})a_{12} - (d_2\tau_{13}^2 -
d_2\tau_{11}\tau_{33})a_{13} - (\tau_{13}\tau_{23} - \tau_{12}\tau_{33})a_{23}\\
0&=&-a_{16}d_2d_3\tau_{12} + a_{15}d_2d_3\tau_{13} + a_{56}d_2d_3 - a_{26}d_3\tau_{22} - a_{36}d_2\tau_{23} +\\
&&a_{25}d_3\tau_{23} + a_{35}d_2\tau_{33} - (d_3\tau_{13}\tau_{22} - d_3\tau_{12}\tau_{23})a_{12} - (d_2\tau_{13}\tau_{23} -
d_2\tau_{12}\tau_{33})a_{13} -\\ 
&&(\tau_{23}^2 -\tau_{22}\tau_{33})a_{23}\\
0&=&a_{56}d_2\tau_{13}
- a_{46}d_2\tau_{23} + a_{45}d_2\tau_{33} - ((\tau_{23}^2 - \tau_{22}\tau_{33})\tau_{11} - (\tau_{13}\tau_{23} - \tau_{12}\tau_{33})\tau_{12} +\\
&&(\tau_{13}\tau_{22} - \tau_{12}\tau_{23})\tau_{13})a_{12} - (d_2\tau_{13}\tau_{23} - d_2\tau_{12}\tau_{33})a_{14} + (d_2\tau_{13}^2 -
d_2\tau_{11}\tau_{33})a_{15} - \\
&&(d_2\tau_{12}\tau_{13} - d_2\tau_{11}\tau_{23})a_{16} - (\tau_{23}^2 - \tau_{22}\tau_{33})a_{24} + (\tau_{13}\tau_{23}
- \tau_{12}\tau_{33})a_{25} - (\tau_{13}\tau_{22} - \tau_{12}\tau_{23})a_{26}\\
0&=&-a_{15}d_2d_3\tau_{11} + a_{14}d_2d_3\tau_{12} + a_{45}d_2d_3 - a_{25}d_3\tau_{12} - a_{35}d_2\tau_{13} +
a_{24}d_3\tau_{22} + \\
&&a_{34}d_2\tau_{23} - (d_3\tau_{12}^2 - d_3\tau_{11}\tau_{22})a_{12} - (d_2\tau_{12}\tau_{13} -
d_2\tau_{11}\tau_{23})a_{13} - (\tau_{13}\tau_{22} - \tau_{12}\tau_{23})a_{23}\\
0&=&a_{56}d_2d_3\tau_{11} - a_{46}d_2d_3\tau_{12} + a_{45}d_2d_3\tau_{13} - ((\tau_{23}^2 - \tau_{22}\tau_{33})\tau_{11} -
(\tau_{13}\tau_{23} - \tau_{12}\tau_{33})\tau_{12} +\\
&& (\tau_{13}\tau_{22} - \tau_{12}\tau_{23})\tau_{13})a_{23} + (d_3\tau_{13}\tau_{22} - d_3\tau_{12}\tau_{23})a_{24}
- (d_3\tau_{12}\tau_{13} - d_3\tau_{11}\tau_{23})a_{25} + \\
&&(d_3\tau_{12}^2 - d_3\tau_{11}\tau_{22})a_{26} + (d_2\tau_{13}\tau_{23} -
d_2\tau_{12}\tau_{33})a_{34} - (d_2\tau_{13}^2 - d_2\tau_{11}\tau_{33})a_{35} +\\
&& (d_2\tau_{12}\tau_{13} -d_2\tau_{11}\tau_{23})a_{36}\\
 0&=&-a_{56}d_3\tau_{12} + a_{46}d_3\tau_{22} -
a_{45}d_3\tau_{23} - ((\tau_{23}^2 - \tau_{22}\tau_{33})\tau_{11} - (\tau_{13}\tau_{23} - \tau_{12}\tau_{33})\tau_{12} +\\ 
&&(\tau_{13}\tau_{22} -\tau_{12}\tau_{23})\tau_{13})a_{13} + (d_3\tau_{13}\tau_{22} - d_3\tau_{12}\tau_{23})a_{14} - (d_3\tau_{12}\tau_{13} - d_3\tau_{11}\tau_{23})a_{15} +\\
&&(d_3\tau_{12}^2 - d_3\tau_{11}\tau_{22})a_{16} - (\tau_{23}^2 - \tau_{22}\tau_{33})a_{34} + (\tau_{13}\tau_{23} - \tau_{12}\tau_{33})a_{35} -
(\tau_{13}\tau_{22} - \tau_{12}\tau_{23})a_{36}\\
d_2d_3&=&-a_{14}d_2d_3 - a_{36}d_3 - a_{25}d_3\\
0&=&-2a_{16}a_{34}d_2 - 2a_{13}a_{46}d_2 - 2a_{15}a_{24}d_3 - 2a_{12}a_{45}d_3 + (a_{36}d_2 +
a_{25}d_3)a_{14} +\\
&& (a_{14}d_3 + a_{36})a_{25} - 2a_{26}a_{35} + (a_{14}d_2 + a_{25})a_{36} -
2a_{23}a_{56}\\
0&=&(a_{36}a_{45} - a_{35}a_{46} + a_{34}a_{56})a_{12} - (a_{26}a_{45} - a_{25}a_{46} +
a_{24}a_{56})a_{13} +\\  
\nonumber&& (a_{26}a_{35} - a_{25}a_{36} + a_{23}a_{56})a_{14} - (a_{26}a_{34} -
a_{24}a_{36} + a_{23}a_{46})a_{15} +\\ 
\nonumber&&(a_{25}a_{34} - a_{24}a_{35} + a_{23}a_{45})a_{16} +
(a_{16}a_{45} - a_{15}a_{46} + a_{14}a_{56})a_{23} - \\
\nonumber&&(a_{16}a_{35} - a_{15}a_{36} +
a_{13}a_{56})a_{24} + (a_{16}a_{34} - a_{14}a_{36} + a_{13}a_{46})a_{25} -\\
\nonumber&& (a_{15}a_{34} -
a_{14}a_{35} + a_{13}a_{45})a_{26} + (a_{16}a_{25} - a_{15}a_{26} + a_{12}a_{56})a_{34} -\\
\nonumber&&(a_{16}a_{24} - a_{14}a_{26} + a_{12}a_{46})a_{35} + (a_{15}a_{24} - a_{14}a_{25} +
a_{12}a_{45})a_{36} +\\
\nonumber&& (a_{16}a_{23} - a_{13}a_{26} + a_{12}a_{36})a_{45} - (a_{15}a_{23} -
a_{13}a_{25} + a_{12}a_{35})a_{46} + \\
\nonumber&&(a_{14}a_{23} - a_{13}a_{24} +
a_{12}a_{34})a_{56}
\end{eqnarray*}
\end{prop}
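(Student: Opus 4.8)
The plan is to specialize Theorem~\ref{thm3.1} to $g=3$ and $n=1$, since an elliptic curve is exactly an abelian subvariety of dimension one, and then to write conditions (a) and (b) out coordinate-by-coordinate, just as was done for $g=2$ in Proposition~\ref{p3.3}. First I would fix a general element $\omega\in\wedge^2\QQ^{6}$, writing $\omega=\sum_{1\le i<j\le 6}a_{ij}\,dx_i\wedge dx_j$; its $\binom{6}{2}=15$ coefficients are precisely the coordinates of the vector $(a_{ij})\in\QQ^{15}$ in the statement. Because $d_1=1$, formula~\eqref{eq3.3} gives $\theta=-\bigl(dx_1\wedge dx_4+d_2\,dx_2\wedge dx_5+d_3\,dx_3\wedge dx_6\bigr)$, while $\chi(\cL)=d_2d_3$ and $\omega_0=(-1)^3\,dx_1\wedge dx_4\wedge dx_2\wedge dx_5\wedge dx_3\wedge dx_6$.

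Next I would translate condition (a). Reading the rows of the period matrix $(D\;Z)$ into \eqref{eq3.1} yields
\begin{gather*}
dz_1=dx_1+\tau_{11}dx_4+\tau_{12}dx_5+\tau_{13}dx_6,\\
dz_2=d_2\,dx_2+\tau_{12}dx_4+\tau_{22}dx_5+\tau_{23}dx_6,\\
dz_3=d_3\,dx_3+\tau_{13}dx_4+\tau_{23}dx_5+\tau_{33}dx_6.
\end{gather*}
The three-form $dz_1\wedge dz_2\wedge dz_3$ has as coefficients the maximal minors of $(D\;Z)$, which range from the $2\times2$ minors of $Z$ (scaled by the $d_i$) up to $\det Z$. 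Wedging with $\omega$ gives an element of the six-dimensional space $\wedge^5\CC^6$, and requiring it to vanish is condition (a); collecting the six components $dx_1\wedge\cdots\widehat{dx_k}\cdots\wedge dx_6$ produces six scalar equations, each linear in the $a_{ij}$ with coefficients polynomial in the $\tau_{ij}$. These are the first six displayed equations.

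For condition (b) I would run through $r=1,2,3$. As $n+1=2$, only $r=1$ has nonzero right-hand side, equal to $\chi(\cL)\cdot 1!\cdot\prod_{i=2}^{3}(i-1)\,\omega_0=2d_2d_3\,\omega_0$; computing $\omega\wedge\theta^{\wedge2}$ and matching the coefficient of $\omega_0$ gives the seventh equation, which is linear in the $a_{ij}$ and free of the $\tau_{ij}$. For $r=2$ the identity $\omega^{\wedge2}\wedge\theta=0$ gives the eighth equation, quadratic in the $a_{ij}$, and for $r=3$ the identity $\omega^{\wedge3}=0$ gives the ninth, a cubic in the $a_{ij}$ that is exactly the Pfaffian of the alternating array $(a_{ij})$. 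Since \eqref{eq3.2} identifies $\NS_\QQ(A)$ with those $\omega\in\wedge^2\QQ^{6}$ satisfying (a), Theorem~\ref{thm3.1} then says that $A$ contains a one-dimensional abelian subvariety exactly when these nine equations have a common rational solution $(a_{ij})$, which is the asserted equivalence.

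I expect the only real obstacle to be the sheer length of the bookkeeping rather than any conceptual issue: the target of condition (a) is six-dimensional and the $r=3$ equation is a cubic Pfaffian, so the expansions are long enough that they are most safely generated and verified with a computer algebra system, exactly as indicated in the introduction.
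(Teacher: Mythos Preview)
Your proposal is correct and follows exactly the paper's own argument: specialize Theorem~\ref{thm3.1} to $g=3$, $n=1$, then read off the six coefficients of $\omega\wedge dz_1\wedge dz_2\wedge dz_3$ in $\wedge^5\CC^6$ for condition~(a) and the three wedge identities $\omega\wedge\theta^{\wedge2}$, $\omega^{\wedge2}\wedge\theta$, $\omega^{\wedge3}$ for condition~(b), with the actual expansions carried out by a computer algebra system (the paper uses SAGE). Your added structural remarks---the explicit $dz_i$, the degrees in the $a_{ij}$, the Pfaffian interpretation of the $r=3$ equation---are accurate elaborations but not a different method.
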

\begin{proof}
For the computation we used the open source computer algebra system SAGE. The first 6 equations correspond
to the coefficient of $dx_1 \wedge \cdots \wedge \widehat {d x_i} \wedge \cdots \wedge dx_6$
(where $dx_i$ is omitted) of equation (a) of Theorem \ref{thm3.1}.  The last 3 equations 
correspond to $r=1,2,3$ in this order of equation (b) of Theorem \ref{thm3.1}.
\end{proof}

\section{Examples}

As pointed out in the introduction, we want to apply Propositions \ref{p3.3} and \ref{p3.4}
to factors of group algebra decompositions of some principally polarized abelian varieties 
in order to check whether they decompose further or not.

\subsection{The family $\cF_5$} In \cite{lrr} a 3-dimensional family $\mathcal{F}_5$ of 
principally polarized abelian varieties of dimension $5$ admitting an action of the dihedral group 
$D_5$ of order 10 was investigated. It is given by the
Riemann matrices
$$
Z=\left(\begin{array}{l l l l l} z_1&z_2&z_3&z_3&z_2\\ & z_1&z_2&z_3&z_3\\ & & z_1&z_2&z_3\\ 
& & & z_1&z_2\\ & & & & z_1\\ \end{array}\right)  \in \cH_5.
$$
Let $A = A_Z  = V/\Lambda_Z$ denote the abelian variety corresponding to $Z$. With respect to
 a basis of $V$ and a symplectic basis $\{\lambda_1,\dots,\lambda_5,\mu_1,\dots , \mu_5\}$ of 
$\Lambda_Z$ the period matrix of $A$ is 
$$
\Pi_Z =(I_5 \; Z).
$$
According to \cite{lrr} the group decomposition of $A_Z$ is 
$$
A_Z \sim E \times B^2.
$$
with an elliptic curve $E$ and an abelian surface $B$. We want to see whether $B$ decomposes 
further. Clearly the abelian subvariety $B$ of $A$ is not uniquely determined, but since the 
decomposability does not depend on the chosen subvariety in the equivalence class, we may assume acording to 
\cite[Proposition 4.4]{lrr} that the sublattice of $\Lambda$ defining $B$ has the basis
$$
\{\lambda_1-\lambda_5, \lambda_2-\lambda_4,\mu_1-\mu_5, \mu_2-\mu_4\}
$$
and moreover with respect to this basis, from the proof of \cite[Prop. 4.4]{lrr} it is deduced that the period matrix of $B$ with respect to the induced polarization of $A$ is given by
$$
\Pi_{B}=\left(\begin{array}{r r r r}2&0&2(z_1-z_2) & 2(z_2-z_3)\\ 0&2&2(z_2-z_3)&2(z_1-z_3)\\ \end{array}\right).
$$

\begin{prop} \label{p5.1}
{\em (a)} For a general $A \in \cF_5$ the abelian subvariety $B$ is irreducible.

{\em (b)} There is a union of surfaces in $\mathcal{F}_5$ whose members split 
isogenously as the product of elliptic curves.
\end{prop}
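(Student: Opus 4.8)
The plan is to apply Proposition \ref{p3.3} to the abelian surface $B$, after first normalizing its induced polarization to a principal one. The diagonal block $\diag(2,2)$ of $\Pi_B$ shows that the induced polarization has type $(2,2)=2\cdot(1,1)$; factoring $2$ out of the lattice spanned by the columns of $\Pi_B$ and rescaling the complex coordinates by $\tfrac12$ identifies $B$, as a complex torus, with $\CC^2/(\ZZ^2+Z_B\ZZ^2)$ carrying the principal polarization given by
\[
Z_B=\begin{pmatrix} z_1-z_2 & z_2-z_3\\ z_2-z_3 & z_1-z_3\end{pmatrix}\in\cH_2.
\]
Since containing an elliptic curve is a property of $B$ as a torus and does not depend on the chosen polarization, I would invoke Proposition \ref{p3.3} with $d_2=1$ and $(\tau_1,\tau_2,\tau_3)=(z_1-z_2,\,z_2-z_3,\,z_1-z_3)$. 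The decisive structural observation is the relation $\tau_3=\tau_1+\tau_2$: thus $B$ depends only on $(\tau_1,\tau_2)$, and the map $\cF_5\to\cA_2$ sending $A$ to $B$ factors through $(\tau_1,\tau_2)$ with one-dimensional fibres (on which $z_2$ varies freely).

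Substituting $\tau_3=\tau_1+\tau_2$ and eliminating $a_{24}=-1-a_{13}$ by the first equation, the second equation of Proposition \ref{p3.3} becomes a single polynomial of degree $\le 2$ in $(\tau_1,\tau_2)$,
\[
a_{12}(\tau_1^2+\tau_1\tau_2-\tau_2^2)+(a_{23}-a_{14})\tau_1+(a_{23}+2a_{13}+1)\tau_2+a_{34}=0,
\]
while the third becomes $a_{14}a_{23}+a_{13}+a_{13}^2+a_{12}a_{34}=0$. For each rational tuple $(a_{ij})$ satisfying the first and third equations, the displayed relation cuts out a curve in the $(\tau_1,\tau_2)$-plane, so the locus where $B$ is non-simple is a countable union of such curves.

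For part (a) it then suffices to show that no rational choice makes the displayed polynomial vanish identically or degenerate to a nonzero constant. In either case the coefficients of $\tau_1^2$, $\tau_1$, $\tau_2$ must vanish, forcing $a_{12}=0$, $a_{14}=a_{23}=-(2a_{13}+1)$, and feeding this into the third equation yields
\[
5a_{13}^2+5a_{13}+1=0,
\]
whose discriminant $5$ is not a square, so there is no rational root. Hence every admissible rational tuple produces a genuine proper curve, the non-simple locus is a countable union of proper subvarieties of the $(\tau_1,\tau_2)$-plane, and a general $A\in\cF_5$ lies off all of them; for such $A$ the surface $B$ is simple, i.e. irreducible, which proves (a).

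For part (b) I would exhibit explicit admissible solutions. The choice $a_{12}=a_{13}=a_{14}=a_{23}=a_{34}=0$, $a_{24}=-1$ satisfies the first and third equations, and the second reduces to $\tau_2=0$, i.e. $z_2=z_3$. On this surface $Z_B=(z_1-z_2)I_2$ is diagonal, so $B$ splits as a product of two isogenous elliptic curves, and consequently $A\sim E\times B^2$ is isogenous to a product of elliptic curves. Letting the rational parameters range over all solutions of the first and third equations, the pullbacks to $\cF_5$ of the resulting curves form the desired nonempty union of surfaces on which $A$ splits, establishing (b). I expect the main obstacle to be the reduction to the principal polarization and the bookkeeping yielding $\tau_3=\tau_1+\tau_2$; once these are in place the whole dichotomy is governed by the quadratic $5a_{13}^2+5a_{13}+1$, whose irrationality—reflecting the field $\QQ(\sqrt5)\subset\QQ(\zeta_5)$ naturally attached to the $D_5$-action—is exactly what separates the generic simple case from the special splitting surfaces.
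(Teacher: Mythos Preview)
Your argument follows the paper's proof almost verbatim: reduce to the principally polarized surface with matrix $Z_B$, apply Proposition~\ref{p3.3}, and show that forcing the second equation to vanish identically leads to $5a_{13}^2+5a_{13}+1=0$, which has no rational root. The only substantive omission is in part (b): you exhibit the locus $z_2=z_3$ (the paper's surface $S_0$) but never verify that it actually meets $\cF_5$, i.e.\ that some $Z$ with $z_2=z_3$ has positive definite imaginary part---the paper handles this by computing the eigenvalues of $\operatorname{Im} Z$ on $S_a$ and finding an explicit open range of parameters, and you should add the analogous one-line check.
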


\begin{proof}
The abelian surface $B$ with period matrix $\Pi_B$ splits if and only if the isogenous principally 
polarized abelian surface $\widetilde B$ with period matrix 
$$
\frac{1}{2} \Pi_B  = \left(\begin{array}{r r r}
 z_1-z_2 & z_2-z_3\\ 
z_2-z_3&z_1-z_3\\ \end{array}\right).
$$ splits. According 
to Proposition \ref{p3.3}, $\widetilde B$ splits if and only if there exists a vector 
$(a_{12},a_{13},a_{14},a_{23},a_{24},a_{34})\in\QQ^6$ such that
\begin{eqnarray} \label{eq5.1}
\nonumber -1&=& a_{13} + a_{24}\\
0&=&[(z_1-z_2)(z_1-z_3) - (z_2-z_3)^2] a_{12} +\\
\nonumber&&+z_1(a_{23}-a_{14})+z_2(a_{13}+a_{14}-a_{24})+z_3(a_{24}-a_{13}-
a_{23})+a_{34}\\
\nonumber0&=&a_{14}a_{23}-a_{13}a_{24}+a_{12}a_{34}
\end{eqnarray}
Suppose that for a general $Z \in \cH_5$ the abelian surface $\widetilde B_Z$ splits. This implies that all 
coefficients of the quadratic polynomial in the $z_i$ vanish. So we get 
$a_{12} = a_{34} = 0, \; a_{23} = a_{14}$ and the system of equations
$$
a_{24} = -a_{13} -1, \qquad a_{24} = a_{13} + a_{14}, \qquad a_{14}^2 = a_{13} a_{24}.
$$
Eliminating $a_{14}$ and $a_{23}$ we remain with the equation $5a_{13}^2 + 5 a_{13} +1 = 
0$ which does not have a rational solution. This completes the proof of (a).

As for (b),  consider the integers 
$a_{12} =  a_{13} = a_{23} = a_{34} = 0, \; a_{24} = -1$   and $a_{14}:=a$ an arbitrary 
rational number. Then the first and the last equation of \eqref{eq5.1} are satisfied and the second 
equation defines the surface  
$$
S_a:=\{(z_1,z_2,z_3) \in \CC^3 \;|\; z_3 = (a+1)z_2-az_1\}
$$
We will show that there exist infinitely many $a\in\mathbb{Q}$ for which $S_a$ gives a non-empty family in $\cH_5$. In fact, denote by $y_i$ the imaginary part of $z_i$.
Then the eigenvalues of the imaginary part of the matrix $Z$ with $(z_1,z_2,z_3) \in S_a$ are:

$$
2a y_2+y_1+4y_2-2a  y_1,\quad \left( 1+\frac{a}{2}+\frac{\sqrt{5}a}{2} \right)  \left( y_1-y_2 \right) ,$$

$$ 
\left( 1+\frac{a}{2}-\frac{\sqrt{5}a}{2} \right)  \left( y_1-y_2 \right) ,\quad 
\left( 1+\frac{a}{2}+\frac{\sqrt{5}a}{2} \right)  \left( y_1-y_2 \right) ,\quad 
\left( 1+\frac{a}{2}-\frac{\sqrt{5}a}{2} \right)  \left( y_1- y_2 \right).
$$
The following conditions imply that these numbers are postive.
$$
\frac{-2}{\sqrt{5}+1}<a<\frac{2}{\sqrt{5}-1}, \quad y_1 > y_2.
$$
So for all rational numbers $a$ in that interval, $S_a$ defines a surface in $\cF_5$ all of whose 
abelian surfaces split up to isogeny.
\end{proof}

\begin{rem}
According to the proof of Proposition \ref{p5.1} there are infinitely many rational numbers $a$ for which $S_a$ defines a surface in 
$\cF_5$. We do not know how many of them are pairwise isomorphic. So we do not know 
how many such surfaces there are. 
\end{rem}

\subsection{The family $\cF_7$} In \cite{lrr} a 4-dimensional family $\mathcal{F}_7$ of 
principally polarized abelian varieties of dimension $7$ admitting an action of the dihedral group 
$D_7$ of order 14 was investigated. It is given by the
Riemann matrices
$$
Z=\left(\begin{array}{l l l l lll} z_1&z_2&z_3&z_4&z_4&z_3&z_2\\ & z_1&z_2&z_3&z_4&z_4&z_3\\ & & z_1&z_2&z_3&z_4&z_4\\ 
& & & z_1&z_2&z_3&z_4\\ 
& & & & z_1&z_2&z_3\\
&&&&&z_1&z_2\\
&&&&&&z_1 \end{array}\right)  \in \cH_7,
$$
Let $A = A_Z  = V/\Lambda_Z$ denote the abelian variety corresponding to $Z$. With respect to
 a basis of $V$ and a symplectic basis $\{\lambda_1,\dots,\lambda_7,\mu_1,\dots , \mu_7\}$ of 
$\Lambda_Z$ the period matrix of $A$ is 
$$
\Pi_Z =(I_7 \; Z).
$$
According to \cite{lrr}, the group algebra decomposition of $A_Z$ is 
$$
A_Z \sim E \times B^2.
$$
with an elliptic curve $E$ and an abelian threefold $B$. We want to see whether $B$ decomposes 
further. As before the abelian subvariety $B$ of $A$ is not uniquely determined, but since the 
decomposability does not depend on the chosen subvariety in the equivalence class, we may assume acording to 
\cite[Proposition 4.4]{lrr} that the sublattice of $\Lambda$ defining $B$ has the basis
$$
\{\lambda_1-\lambda_7, \lambda_2-\lambda_6,\lambda_3 - \lambda_5,\mu_1-\mu_7, \mu_2-\mu_6
,\mu_3 - \mu_5\}
$$
and moreover by the proof of \cite[Prop. 4.4]{lrr}, it is deduced that with respect to this basis the period matrix of $B$ with respect to the induced polarization of $A$ is given by
$$
\Pi_{B}=2\left(\begin{array}{r r r rrr}1&0&0&z_1-z_2 &z_2-z_3&z_3-z_4\\ 
0&1&0&z_2-z_3&z_1-z_4&z_2-z_4\\
0&0&1&z_3-z_4&z_2-z_4&z_1-z_3 \end{array}\right).
$$

\begin{prop}\label{p5.2}
For a general principally polarized abelian variety in $\cF_7$, $B$ is simple.
\end{prop}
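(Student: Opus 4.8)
The plan is to reduce the simplicity of the threefold $B$ to a concrete non-solvability statement about the system of equations in Proposition \ref{p3.4}, exactly as was done for $\cF_5$ via Proposition \ref{p3.3}. First I would pass to an isogenous principally polarized threefold $\widetilde B$ with period matrix $\frac{1}{2}\Pi_B$, so that the right-hand block
$$
Z_B := \begin{pmatrix} z_1-z_2 & z_2-z_3 & z_3-z_4\\ z_2-z_3 & z_1-z_4 & z_2-z_4\\ z_3-z_4 & z_2-z_4 & z_1-z_3\end{pmatrix}
$$
plays the role of the matrix $(\tau_{ij})$ in Proposition \ref{p3.4} with $d_2=d_3=1$. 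Since isogeny preserves simplicity, $B$ is simple if and only if $\widetilde B$ admits no sub-elliptic curve and no abelian sub-surface. By Theorem \ref{thm3.1} (equivalently, Theorem \ref{thm3.3}) the existence of a proper abelian subvariety of either dimension is governed by a rational solution $\omega\in\wedge^2\QQ^6$ of equations (a) and (b); for a sub-elliptic curve this is precisely the $15$-variable system of Proposition \ref{p3.4}, and for a sub-surface it is the complementary system (the numerical class of the complementary surface of an elliptic curve gives the $n=2$ solution, so it suffices to rule out sub-elliptic curves).

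The key step is then to substitute the entries of $Z_B$ as polynomials in the $\tau_{ij}\mapsto$ expressions above into the system of Proposition \ref{p3.4}, regard the equations as identities in the independent variables $z_1,z_2,z_3,z_4$, and argue that for a \emph{general} member of $\cF_7$ there is no rational $(a_{ij})_{1\le i<j\le 6}$ solving them. The mechanism is the same as in the proof of Proposition \ref{p5.1}(a): the six linear equations (a) together with the two lower equations (b) (for $r=1,2$) are polynomial identities in the $z_i$, so for them to hold for a generic $Z$ every coefficient of each monomial in $z_1,z_2,z_3,z_4$ must vanish. Collecting these coefficient conditions forces many of the $a_{ij}$ to be zero and imposes linear relations among the rest, collapsing the system to a small system of polynomial equations in a few of the $a_{ij}$ with \emph{integer} coefficients. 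One then shows this reduced system has no solution over $\QQ$ (the dihedral symmetry of $Z$ should again produce a discriminant involving $\sqrt 7$, forcing irrationality, in analogy with the $5a_{13}^2+5a_{13}+1=0$ obstruction for $\cF_5$).

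I would carry out the steps in this order: (1) record $\widetilde B$ and identify $(\tau_{ij})$ with the entries of $Z_B$; (2) substitute into the nine equations of Proposition \ref{p3.4} and expand each as a polynomial in $z_1,\dots,z_4$; (3) impose vanishing of all coefficients of the six linear equations (a) and the first two equations of (b), extracting the resulting linear constraints on the $a_{ij}$; (4) feed these constraints into the remaining cubic equation (the last line of Proposition \ref{p3.4}, i.e.\ $r=3$) and verify that the surviving system has no rational solution. As in the $\cF_5$ case this is most safely done with the same computer algebra computation (SAGE) already used to derive Proposition \ref{p3.4}.

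The main obstacle is step (3)--(4): unlike the surface case, where the system was small enough to eliminate by hand, here there are fifteen unknowns $a_{ij}$ and nine equations whose coefficients are cubic in the four variables $z_i$, so the coefficient-matching produces a large linear system whose solution space must be computed carefully before substitution into the final cubic. The danger is that, after imposing genericity, a \emph{positive-dimensional} family of rational solutions could survive and would correspond to spurious non-reduced idempotents rather than genuine elliptic curves; I expect the $D_7$-symmetry (concretely, the appearance of the golden-ratio analogue $\cos(2\pi/7)$ and its conjugates, whose minimal polynomial over $\QQ$ is cubic) to be exactly what obstructs rationality and forces the only solutions to be the trivial ones $\omega=0$ or $\omega=\theta$. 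Making this irrationality obstruction explicit — identifying the cubic over $\QQ$ that has no rational root — is the heart of the argument, and I would verify it symbolically rather than by hand.
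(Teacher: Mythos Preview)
Your plan is exactly the paper's: pass to the principally polarized $\widetilde B$, substitute $Z_B$ into Proposition \ref{p3.4} with $d_2=d_3=1$, kill all $z_i$-coefficients of the six equations (a) by genericity (this leaves three free parameters $a_{15},a_{25},a_{34}$), and then work through the three equations (b), which are already pure in the $a_{ij}$ and need no coefficient-matching.

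One genuine correction to your expectation for step (4): the analogy with $\cF_5$ fails at the quadratic stage. After the linear $r=1$ equation eliminates $a_{15}$, the $r=2$ equation is the conic $42a_{25}^2-42a_{25}a_{34}+14a_{34}^2-14a_{25}+7a_{34}+1=0$, and its discriminant $-588a_{34}^2+28$ \emph{is} a rational square for infinitely many $a_{34}$ (e.g.\ $a_{34}=1/7$); there is no $\sqrt 7$ obstruction here, and no irreducible cubic over $\QQ$ appears. The paper instead parametrizes the rational points of this conic by stereographic projection in a parameter $t\in\QQ$, substitutes into the $r=3$ equation, and obtains two degree-$6$ polynomials in $t$ which SAGE certifies have no rational roots. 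So the obstruction lives one level deeper than you anticipate, and your proof sketch would stall if you stopped at the quadratic expecting it to be insoluble.
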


\begin{proof} We need to show that a general $B$ contains no elliptic curves. Let $$
\omega=\sum_{i<j}a_{ij}dx_i\wedge dx_j\in\wedge^2\QQ^6.
$$ 
We see that if we divide the period matrix of $B$ by $2$, we obtain an isomorphic abelian variety which is principally polarized. After writing out the first six equations from Proposition~\ref{p3.4} for $\omega$ and the modified period matrix for $B$, we obtain polynomials in $z_1$, $z_2$, $z_3$ and $z_4$ with rational coefficients. If we take these to be algebraically independent, then each coefficient of the polynomials must be equal to 0, and we obtain the following relations:
$$a_{12}=a_{13}=a_{23}=a_{45}=a_{46}=a_{56}=0$$
$$a_{14}=-a_{15}+a_{25}-a_{34}$$
$$a_{16}=a_{34}$$
$$a_{24}=a_{15}$$
$$a_{26}=a_{35}=a_{15}+a_{34}$$
$$a_{36}=a_{25}-a_{34}$$
The parameters $a_{15}$, $a_{25}$ and $a_{34}$ are free, and the rest can be written in terms of these. If we now ask for $\omega\wedge\theta\wedge\theta=-dx_1\wedge dx_4\wedge dx_2\wedge dx_5\wedge dx_3\wedge dx_6$ (which is the 7th equation in \ref{p3.4} with $d_2=d_3=1$), we obtain the equation 
$$a_{15}=-\frac{1}{2}+3a_{25}-2a_{34}.$$
The equation $\omega\wedge\omega\wedge\theta=0$ leads to
$$42a_{25}^2 - 42a_{25}a_{34} + 14a_{34}^2 - 14a_{25} + 7a_{34} + 1=0.$$
Solving for $a_{25}$ in terms of $a_{34}$, we get that
$$a_{25}=\frac{42a_{34}+14\pm\sqrt{(42a_{34}+14)^2-168(14a_{34}^2+7a_{34}+1)}}{84}.$$
Now $(42a_{34}+14)^2-168(14a_{34}^2+7a_{34}+1)=-588a_{34}^2+28$. We see that $a_{34}=1/7$ makes this number a square, and using the stereographic projection, we get that this expression is the square of a rational number if and only if
$$a_{34}=-\frac{168+8t}{588+t^2}+\frac{1}{7}$$
for $t\in\mathbb{Q}$. In this case, we obtain the solutions
$$a_{25}=-\frac{2 \, t^{2}}{21 \, {\left(t^{2} + 588\right)}} - \frac{6 \,
t}{t^{2} + 588} - \frac{84}{t^{2} + 588} + \frac{2}{7}$$
$$a_{25}=\frac{2 \, t^{2}}{21 \, {\left(t^{2} + 588\right)}} - \frac{2 \,
t}{t^{2} + 588} - \frac{84}{t^{2} + 588} + \frac{4}{21}.$$
If we now ask for the equation $\omega\wedge\omega\wedge\omega=0$, we obtain the two equations
$$t^{6} - 756 \, t^{5} + 139356 \, t^{4} + 1481760 \, t^{3} - 52898832 \,
t^{2} - 261382464 \, t + 5489031744=0$$
$$3 \, t^{6} - 1092 \, t^{5} + 88788 \, t^{4} + 2140320 \, t^{3} -
29618736 \, t^{2} - 377552448 \, t + 3817474752=0.$$
A quick check in SAGE shows us that these two polynomials have no rational roots, and so the equations of Proposition~\ref{p3.4} have no rational solutions. 
\end{proof}

\begin{rem}
We note that the proof of Proposition~\ref{p5.2} shows that the Picard number of a very general $B$ in the above family is 3 (since the parameters $a_{15}$, $a_{25}$ and $a_{34}$ are free), and the Picard number of any $B$ is at least 3.
\end{rem}

\begin{rem}
Not every $B$ is simple. For example, setting $z_2=z_3=z_4$ we obtain a one dimensional family of threefolds that are isomorphic (as varieties) to the product of elliptic curves. We see that the differential forms
$$
\eta=\frac{1}{2}dx_1\wedge dx_4  \quad \mbox{and} \quad 
\nonumber\mu=\frac{1}{2}dx_2\wedge dx_5
$$
satisfy the equations of Proposition~\ref{p3.4} for this family, for example. By fixing $\eta$ and using the equations from Proposition~\ref{p3.4}, we see that the N\'eron-Severi group of a $B$ contains $\eta$ (and $\mu$) if and only if 
$$z_2=z_3=z_4.$$
This gives a surface in $\cF_7$ whose members split as the product of three elliptic curves and two abelian surfaces. 
\end{rem}

\subsection{The family $\cG$}

In this subsection we study the factors of the group algebra decomposition of a 3-dimensional family
of  principally polarized abelian varieties of dimension 6 with an action of the dihedral group of order $24$
$$
G:=\langle r,s:r^{12}=s^2=(rs)^2=1\rangle.
$$

First, recall some known results from \cite{brr}, and the references given there. Let $K$ be a finite group, an action of $K$ on a Riemann surface $X$ is an injective homomorphism from $K$ to the group of holomorphic automorphisms of $X$. $K$ acts with signature $(\gamma; m_1, \dots, m_t)$ if the quotient surface $X/K$ is of genus $\gamma$ and the covering $\pi:X\to X/K$ is ramified over $t$ points over which $\pi$ is locally $m_i:1$. Notice that the Riemann-Hurwitz equation
$$g=|K|(\gamma-1)+1+\frac{|K|}{2}\sum_{i=1}^t\left(1-\frac{1}{m_i}\right)$$
must be satisfied, where $g$ is the genus of $X$. This imposes restrictions on $\gamma$ and the $m_i'$s for given $K$ and $g$. A signature for $K$ satisfying this numerical condition is called an admissible signature. 
Moreover, given an admissible signature $(0; m_1, \dots, m_t)$ for a group $K$, a covering where $K$ acts with that signature can be constructed by a {\it generating vector}, which is a $t$-tuple $(g_1, \dots, g_t)$ of elements of $K$ generating $K$ and satisfying $g_1 \cdots g_t = 1$ such that $g_i$ is of order $m_i$ for all $i$. 

In fact, denote by $\Gamma$ the fundamental group of $\mathbb{P}^1$ with $t$ points removed.
Choose a set of generators $\alpha_1, \dots, \alpha_t$ of $\Gamma$ satisfying $\alpha_1 \cdots \alpha_t = 1$. Then a generating vector induces an exact sequence
$$
1 \to \Gamma' \to \Gamma \to K \ra 1
$$
and the normal subgroup $\Gamma'$ of $\Gamma$ defines the Galois covering  $\pi: X \to \mathbb{P}^1$.

Moreover, in \cite{brr} there is an algorithm to compute a symplectic basis of $H_1(X,\mathbb{Z})$ together with 
the induced action of $K$ on this basis, given a generating vector for the action (under the assumption of having $\mathbb{P}^1$ as total quotient). The algorithm starts by finding a basis that reflects the action, which is turned into a symplectic basis by the Frobenius algorithm, giving in this way a symplectic representation for the group $K$. This algorithm is implemented in a computer program, which is available at http://www.geometry.uchile.cl.

For the group studied in this subsection we have that the tuple $(r^3,r^4,r^5s,s)$ is a generating vector for the signature $(0;4,3,2,2)$. 
By the Riemann-Hurwitz formula, the corresponding curve is of genus 6.
Therefore there is a one-dimensional family of Jacobians with the action of $G$ coming from its action on the corresponding Riemann surface. The parameter of this family comes from the fact that the covering ramifies over four points on $\mathbb{P}^1$, therefore fixing three of them and allowing one to move we obtain the family.

Using the method described in \cite{brr} we compute the corresponding symplectic representation of $G$, and we compute the Riemann matrices in $\cH_6$ fixed by this action.
We obtain a $3-$dimensional family $\mathcal{G}$ of principally polarized abelian varieties of dimension $6$ with Riemann matrices
$$
Z=\left(\begin{array}{r r r r r r} -2x-2y+2z &x-\frac{3}{2}z&0&x+2y&2x+y-2z& -x-y+z\\ \\ & 2y+2z& x &-x-2y-\frac{1}{2}z&-2x+2y+2z& x-y-z\\ \\ & & -2y+z&y-\frac{1}{2}z&-x+y-\frac{1}{2}z& x\\ \\ & & & z&-3y&y\\ \\ & & & & 3z&-\frac{3}{2}z\\ \\ & & & & &z\\ \end{array}\right)
$$
The three parameters are complex numbers such that the corresponding matrix has positive definite imaginary part. The period matrix of a generic element $A$ in $\mathcal{G}$ is $\Pi=(I_6 \; Z)$, after fixing a symplectic basis 
$$
\Gamma=\{\alpha_1,\dots,\alpha_6,\beta_1,\dots , \beta_6\}
$$
for the lattice of $A_Z$.
A generic element $A\in \mathcal{G}$ decomposes as $A\sim S\times F$, where $S$ is an abelian surface and $F$ is a 4-dimensional abelian variety. We note that for the parameters $x=y=0$ and $z=1$, the matrix
$$Z_0=\left(\begin{array}{r r r r r r} 2 &-\frac{3}{2}&0&0&-2&1\\ \\ &2& 0 &-\frac{1}{2}&2& -1\\ \\ & & 1&-\frac{1}{2}&-\frac{1}{2}& 0\\ \\ & & & 1&0&0\\ \\ & & & & 3&-\frac{3}{2}\\ \\ & & & & &1\\ \end{array}\right)$$
is positive definite and $\tau Z_0\in\cG$ for any $\tau\in\mathbb{H}$. The family $\cG$ is therefore non-empty and thus three-dimensional.

Following the method in \cite[Table 4]{lr_Arch}, we determine that the induced polarization on $S$ is of type $(2,2)$, and on $F$ is of type $(1,1,2,2)$. Besides, the lattices 
for $S$ and $F$ are generated by the following symplectic bases:
$$\Gamma_S=\{\alpha_2+\alpha_5-\alpha_6,\alpha_1-\alpha_2+\alpha_4-\alpha_5,\beta_1+\beta_2-\beta_6,\beta_4-\beta_5-\beta_6\},$$
$$\Gamma_F=\{\alpha_3,-\alpha_1+\alpha_2,2\alpha_1-\alpha_2-\alpha_5+\alpha_6,-\alpha_1+\alpha_2+\alpha_4+\alpha_5,\beta_3,\beta_2-\beta_5,
\beta_1+\beta_2+\beta_6,\beta_4+\beta_5+\beta_6\}.$$
$S$ and $F$ are isotypical factors; this is, each one is the image of a central idempotent in the group algebra $\mathbb{Q}[G]$. Therefore they have the action of $G$. 
Using the group algebra decomposition, it is possible to decompose both factors further:
$$
S\sim E^2, \text{ and } F\sim S_1\times S_2,
$$
where $E$ is an elliptic curve with induced polarization of type $(4)$, and $S_i$ is an abelian surface with induced polarization of type $(2,4)$. $S_i$ corresponds to a primitive idempotent in $\mathbb{Q}[G]$, therefore
using the group action we cannot decompose  further either of the surfaces decomposing $F$. 

From $\Gamma_F$, replacing each $\beta_i$ by the combination of $\alpha_j$ given by the Riemann matrix of $A$, we get the following period matrix for $F=V_F/L_F$. Recall that the basis for $V_F$ is
$$
\{\alpha_3,-\alpha_1+\alpha_2,\frac{2\alpha_1-\alpha_2-\alpha_5+\alpha_6}{2},\frac{-\alpha_1+\alpha_2+\alpha_4+\alpha_5}{2}\},
$$
in order to consider $F$ with its induced polarization.
Under this consideration the period matrix of $L_F$ is
$$P_F=\left(\begin{array}{r r r r r r r r} 1&0&0&0& -2y+z&2x-y+\frac{z}{2}&2x &2y-z\\ 0&1&0&0&2x-y+\frac{z}{2}& 4x-2y+z&2x-2y+z&-2x+2y-z\\ 0&0&2&0&
2x&2x-2y+z&-4y+2z&2y-z\\ 0&0&0&2&2y-z& -2x+2 y-z &2y-z&-4 y-2z\\ \end{array}\right).$$

Moreover, we explore the primitive idempotents in  $\mathbb{Q}[G]$ giving varieties decomposing $F$, and we find 12 of them, all defining abelian surfaces with the same type of polarization $(2,4)$. We choose two of these abelian surfaces; the advantage of the chosen ones is that they define an isomorphism from $S_1\times S_2$ to $F$.

Let $\Gamma_i$ be a symplectic basis for the lattice $L_i$ of $S_i=V_i/L_i$ for $i=1,2$. Then we have
$$
\Gamma_1=\{\alpha_1-\alpha_2,-\alpha_2-2\alpha_3+2\alpha_4+3\alpha_5-\alpha_6,\beta_1-\beta_2+\beta_3-\beta_4+\beta_5,\beta_4+\beta_5+\beta_6\},
$$
$$
\Gamma_2=\{-\alpha_3+\alpha_4+\alpha_5,\alpha_1-\alpha_2+\alpha_4+\alpha_5,\beta_2-\beta_3+\beta_4+\beta_6,\beta_1-\beta_2+2\beta_3+2\beta_5+\beta_6\}.
$$
Note that $\langle\Gamma_1\rangle+ \langle\Gamma_2\rangle=\langle\Gamma_F\rangle$, therefore there is an isomorphism defined by the addition 
$$
S_1\times S_2\to F,\;\; (s_1,s_2)\mapsto s_1+s_2.
$$
Considering that the induced polarization on $S_i$ is of type $(2,4)$, and replacing the curves $\beta_j$ by the corresponding combination of curves $\alpha_k$ given by the Riemann matrix of $A$, we get the following period matrix for the lattice of $S_1$ and $S_2$
$$
P=\left(\begin{array}{r r r r } 2&0 &-4x-6y+3z &4x+4y-2z\\ 0&4 & 4x+4y-2z & -4y+2z\\ \end{array}\right),
$$
We obtain the same period matrix for both subvarieties, therefore they are isomorphic.

\begin{prop}
There are polynomials $Q,R\in\mathbb{C}[x,y,z,a_{12},a_{13},a_{14},a_{23},a_{34}]$ such that in the family of surfaces
$$f:\mbox{Spec }\mathbb{C}[x,y,z,a_{12},a_{13},a_{14},a_{23},a_{34}]/(Q,R)\to \mbox{Spec }\mathbb{C}[a_{12},a_{13},a_{14},a_{23},a_{34}]/(R),$$
each fiber over a rational point of the base scheme corresponds to a (possibly empty) family of principally polarized abelian varieties 
in $\mathcal{G}$ that split isogenously as the product of elliptic curves. Explicitly, if $(x,y,z)$ belongs to a fiber of $f$, we replace these in the matrix $Z$; if 
this matrix lies in the Siegel half space it corresponds to a completely decomposable principally polarized abelian variety in $\mathcal{G}$.
\end{prop}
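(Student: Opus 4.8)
The plan is to reduce the complete decomposability of a member $A\in\cG$ to a single condition on the abelian surface $S_1$, and then to apply Proposition \ref{p3.3} to $S_1$ after normalizing its polarization. First I would observe that $A$ is isogenous to a product of elliptic curves if and only if $S_1$ is. Indeed, by the group algebra decomposition $A\sim E^2\times S_1\times S_2$ with $E$ an elliptic curve and $S_1\cong S_2$; since $A$ splits as a product of elliptic curves precisely when every factor in its isogeny decomposition is an elliptic curve, and $E$ already is one, $A$ splits if and only if both $S_1$ and $S_2$ do, hence (as $S_1\cong S_2$) if and only if $S_1$ does. Moreover an abelian surface is isogenous to a product of two elliptic curves exactly when it contains an elliptic curve, so the whole question becomes whether $S_1$ admits a sub elliptic curve.

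Next I would normalize the polarization so that Proposition \ref{p3.3} applies. The surface $S_1$ carries the induced polarization of type $(2,4)=2\cdot(1,2)$ and period matrix $P$. Dividing by $2$ yields the period matrix $\tfrac12 P=(\diag(1,2)\; Z')$ of an isomorphic abelian surface $\widetilde S_1$ carrying the primitive polarization of type $(1,2)$, where
$$
Z'=\begin{pmatrix} -2x-3y+\tfrac32 z & 2x+2y-z\\ 2x+2y-z & -2y+z\end{pmatrix}=:\begin{pmatrix}\tau_1&\tau_2\\\tau_2&\tau_3\end{pmatrix}
$$
has entries that are explicit linear forms in $x,y,z$. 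Passing from $S_1$ to $\widetilde S_1$ leaves the underlying surface (and hence the presence of an elliptic curve) unchanged, so Proposition \ref{p3.3} with $d_2=2$ says that $\widetilde S_1$, and therefore $A$, splits if and only if there is a rational vector $(a_{12},a_{13},a_{14},a_{23},a_{24},a_{34})\in\QQ^6$ satisfying the three equations of that proposition for these $\tau_1,\tau_2,\tau_3$.

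I would then eliminate $a_{24}$ using the linear equation $-2=2a_{13}+a_{24}$, i.e. $a_{24}=-2-2a_{13}$. Substituting this into the third equation $a_{14}a_{23}-a_{13}a_{24}+a_{12}a_{34}=0$ produces a polynomial $R$ involving only $a_{12},a_{13},a_{14},a_{23},a_{34}$, because that equation contains no $\tau_i$; substituting it into the second equation, together with the linear expressions for $\tau_1,\tau_2,\tau_3$, produces a polynomial $Q\in\CC[x,y,z,a_{12},a_{13},a_{14},a_{23},a_{34}]$. These are the desired $Q$ and $R$. With this choice the base scheme $\Spec\,\CC[a_{12},a_{13},a_{14},a_{23},a_{34}]/(R)$ parametrizes exactly the rational data $(a_{ij})$ that can come from an elliptic-curve class, and for each rational point of the base the fiber $\{(x,y,z):Q=0\}$ consists precisely of those parameters for which that class is realized, i.e. for which $\widetilde S_1$ (hence $A$) splits; imposing that the resulting $Z$ lie in $\cH_6$ then isolates the genuine members of $\cG$, giving the stated family structure.

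The main obstacle I anticipate is conceptual rather than computational: one must justify that the two reductions — from $A$ to the surface $S_1$, and from the $(2,4)$-polarization to its primitive $(1,2)$-reduction — are both neutral for the property of being isogenous to a product of elliptic curves, and in particular that a condition on $S_1$ alone (rather than on $S_1$ and $S_2$ separately) captures complete decomposability. This is what lets the single system from Proposition \ref{p3.3} govern the whole six-dimensional variety. The remaining work is the routine substitution producing the explicit $Q$ and $R$, which separates cleanly into an $(x,y,z)$-dependent part and an $(x,y,z)$-free part precisely because the third equation of Proposition \ref{p3.3} is free of the period entries.
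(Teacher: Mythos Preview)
Your proposal is correct and follows essentially the same route as the paper: reduce complete decomposability of $A$ to the splitting of $S_1$, pass from the $(2,4)$-polarization to the primitive $(1,2)$-polarization, apply Proposition~\ref{p3.3}, and eliminate $a_{24}$ via the linear equation to obtain $Q$ and $R$. Your justification of the two reduction steps is in fact more explicit than the paper's, and your elimination $a_{24}=-2-2a_{13}$ is the one that actually yields the paper's $R=2a_{13}^2+2a_{13}+a_{14}a_{23}+a_{12}a_{34}$.
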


\begin{proof}
In order for a member of $\mathcal{G}$ to split as the product of elliptic curves, it is only necessary to make $S_1$ (and therefore $S_2$) split. Since $S_1$ with the restricted polarization is of type $(2,4)$, we can divide this polarization by 2 to obtain a polarization of type $(1,2)$. Writing the equations from Proposition \ref{p3.3} for when an abelian surface splits as the product of elliptic curves in this scenario and replacing $a_{24}=2(1-a_{23})$, the second equation becomes 
\begin{eqnarray}\nonumber Q&=&a_{12}(-4x^2+2y^2+\frac{1}{2}z^2-4xy+2xz-2yz)+4a_{13}(2x+2y-z)+a_{14}(4x+6y-3z)\\
\nonumber&&+a_{23}(-2y+z)+2a_{34}+4x+4y-2z.
\end{eqnarray}
and the third equation becomes
\begin{equation}
\nonumber R=2a_{13}^2+2a_{13}+a_{14}a_{23}+a_{12}a_{34}.
\end{equation}
These polynomials give us the family of surfaces above.
\end{proof}

\begin{cor}
If we put $a_{12}=0$, we obtain linear equations that give us families of surfaces in $\mathcal{G}$ that consist of completely decomposable principally polarized abelian varieties.
\end{cor}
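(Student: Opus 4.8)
The plan is to specialize the two defining polynomials $Q$ and $R$ of the preceding proposition (which writes out Proposition~\ref{p3.3} for the $(1,2)$-polarized surface $\frac12 S_1$) to $a_{12}=0$ and read off that the resulting conditions are linear in the moduli parameters $x,y,z$. Setting $a_{12}=0$, the coefficient $-4x^2+2y^2+\tfrac12 z^2-4xy+2xz-2yz$ of $a_{12}$ in $Q$ disappears, so that $Q$ becomes an affine-linear form in $x,y,z$ with rational coefficients, namely $Q=4a_{13}(2x+2y-z)+a_{14}(4x+6y-3z)+a_{23}(-2y+z)+2a_{34}+4x+4y-2z$. At the same time $R$ collapses to $R=2a_{13}^2+2a_{13}+a_{14}a_{23}$, which no longer involves $x,y,z$ or $a_{34}$. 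Thus $R=0$ becomes a single relation among $a_{13},a_{14},a_{23}$ alone, and it has infinitely many rational solutions (for instance, for any $a_{13}\in\QQ$ and any nonzero $a_{14}\in\QQ$ put $a_{23}=-(2a_{13}^2+2a_{13})/a_{14}$).

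First I would fix one such rational triple $(a_{13},a_{14},a_{23})$ together with an arbitrary $a_{34}\in\QQ$. Then $Q=0$ is a single affine-linear equation with rational coefficients in $(x,y,z)\in\CC^3$, hence cuts out a complex hyperplane; intersecting it with the open locus of $\CC^3$ on which the associated matrix $Z$ lies in $\cH_6$ produces (when nonempty) a two-dimensional family, i.e.\ a surface inside the three-dimensional family $\cG$. By the preceding proposition, every point of this surface makes $S_1$ split as a product of elliptic curves; since $S_1\cong S_2$ and $F\sim S_1\times S_2$, the factor $F$ then splits completely, and because $S\sim E^2$ always holds, the whole variety $A\sim S\times F$ is a product of elliptic curves up to isogeny. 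Hence each member of the surface is completely decomposable, and letting the rational data $(a_{13},a_{14},a_{23},a_{34})$ range over the infinitely many solutions of $R=0$ yields the asserted families of such surfaces.

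The one genuine point to verify — and the main obstacle — is that these rational linear slices are not disjoint from the Siegel locus, i.e.\ that at least some of the families are nonempty. I would settle this with an explicit choice rather than an abstract argument: taking $a_{13}=a_{14}=0$, $a_{23}=2$ and $a_{34}=0$ satisfies $R=0$ and reduces $Q$ to $4x$, so the slice is simply the hyperplane $x=0$. This hyperplane contains the points $(x,y,z)=(0,0,\tau)$ with $\tau\in\mathbb{H}$, for which $Z=\tau Z_0$ has positive-definite imaginary part and so lies in $\cH_6$; since positive definiteness of $\im Z$ is an open condition, the plane $x=0$ meets $\cH_6$ in a full two-dimensional surface. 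This exhibits a nonempty family of completely decomposable principally polarized abelian varieties in $\cG$ and completes the verification.
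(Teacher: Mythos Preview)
Your argument is correct and matches the paper's intended reasoning: the corollary is stated there without proof, being immediate from the proposition once one notices that killing the $a_{12}$-term makes $Q$ affine-linear in $(x,y,z)$ while $R$ reduces to a relation among the $a_{ij}$ alone. The paper then illustrates non-emptiness with the rational point $(a_{12},a_{13},a_{14},a_{23},a_{34})=(0,0,0,1,0)$, giving the plane $4x+2y-z=0$ and an explicit $\tau$-family on it; your choice $(0,0,0,2,0)$, yielding the plane $x=0$ and the ray $(0,0,\tau)$ through $\tau Z_0$, is an equally valid witness and arguably cleaner since it hooks directly into the $Z_0$ already exhibited.
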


For example, the rational point $(0,0,0,1,0)$ gives us the surface $4x+2y-z=0$, and for $\tau\in\cH_1,$ the matrix
$$\tau \cdot \left(\begin{array}{rrrrrr}6&-5&0&1&-6&3\\&8&1&-3&6&-3\\&&4&-2&-3&1\\&&&4&0&0\\&&&&12&-6\\&&&&&4\end{array}\right)$$
has positive definite imaginary part and corresponds to a point of this fiber.

\end{document}